\newtheorem*{theorem-non}{Theorem}
\newtheorem{theorem}{Theorem}
\newtheorem{lemma}{lemma}[section]
\newtheorem{proposition}{Proposition}[section]
\newtheorem{definition}{Definition}[section]
\newtheorem{example}{Example}[section]
\newenvironment{proof}[1][Proof]{\noindent\textbf{#1.} }{\ \rule{0.5em}{0.5em}}
\numberwithin{theorem}{section}
\numberwithin{equation}{section}
\begin{document}

\title{\textbf{Spacelike Curves of Constant-Ratio in Pseudo-Galilean Space}}
\author{\emph{H. S. Abdel-Aziz}\thanks{
~E-mail address:~habdelaziz2005@yahoo.com}, \emph{M. Khalifa Saad}\thanks{
~E-mail address:~mohamed\_khalifa77@science.sohag.edu.eg} \ and \emph{%
Haytham. A. Ali}\thanks{
~E-mail address:~haytham.ali88@yahoo.com} \\
$^{\star,\dag,\ddag}${\small \emph{Dept. of Math., Faculty of Science, Sohag
Univ., 82524 Sohag, Egypt}}\\
$^{\dag}$ {\small \emph{Dept. of Math., Faculty of Science, Islamic
University in Madinah, KSA}}}
\date{}
\maketitle

\textbf{Abstract. }In the theory of differential geometry curves, a curve is said to
be of constant-ratio if the ratio of the length of the tangential and normal
components of its position vector function is constant. In this paper, we
study and characterize a spacelike admissible curve of constant-ratio in
terms of its curvature functions in pseudo-Galilean space $G_{3}^{1}$. Some
special curves of constant-ratio such as $T$ and $N$ constant types
are investigated. As an application of our main results, some
examples are given.\newline
\textbf{Keywords:} Curves of constant-ratio; Position
vector; Pseudo-Galilean space; Frenet frame.\newline
\textbf{Mathematics Subject Classification:} 53A04, 53A17, 53B30.

\section{Introduction}

From the differential geometry of the curve theory it is well known that a
curve $\alpha (s)$ in $E^{3}$ lies on a sphere if its position vector lies
on its normal plane at each point. If the position vector $\alpha $ lies on
its rectifying plane then $\alpha (s)$ is called rectifying curve \cite{1}.
Rectifying curves characterized by the simple equation%
\begin{equation}
\alpha (s)=\lambda (s)T(s)+\mu (s)B(s),
\end{equation}

where $\lambda (s)$ and $%
\mu
(s)$ are smooth functions and $T(s)$ and $B(s)$ are tangent and binormal
vector fields of $\alpha $, respectively. In \cite{2} the author provide
that a twisted curve is congruent to a non constant linear function of $s$.
On the hand, in the Minkowski 3-space $E_{1}^{3}$, the rectifying curves are
investigated in \cite{3,4}. Besides, in \cite{4} a characterization of the
spacelike, the timelike and the null rectifying curves in the Minkowski
3-space in terms of centrodes is given. The characterization of rectifying
curves in three dimensional compact Lee groups as well as in dual spaces is
given in \cite{5}, \cite{6}, respectively. For the study of constant-ratio
curves, the authors give the necessary and sufficient conditions for curves
in Euclidean and Minkowski spaces to become $T-constant$ or $N-constant$\cite%
{7,8,9,10}. In analogy with the Euclidean 3-dimensional case, our main goal
in this work is to define the spacelike admissible curves of constant-ratio
in the pseudo Galilean 3-space as a curve whose position vector always lies
in the orthogonal complement $N^{\perp }$ of its principal normal vector
field $N$. Consequently, $N^{\perp }$ is given by%
\begin{equation*}
N^{\perp }=\{V\in G_{3}^{1}:<V,N>=0\},
\end{equation*}

where $<$\textperiodcentered $,$\textperiodcentered $>$ denotes the scalar
product in $G_{3}^{1}$. Hence $N^{\perp }$ is a 2-dimensional plane of $%
G_{3}^{1}$, spanned by the tangent and binormal vector fields $T$ and $B$
respectively. Therefore, the position vector with respect to some chosen
origin, of a considered curve $\alpha $ in $G_{3}^{1}$, satisfies the
parametric equation
\begin{equation}
\alpha (s)=m_{o}(s)T(s)+m_{1}(s)N(s)+m_{2}(s)B(s),
\end{equation}

for some differential functions $m_{i}(s)$, $0\leq i\leq 2$ in arc-length
function $s$, and we give the necessary and sufficient conditions for the
curve $\alpha $ in $G_{3}^{1}$ to be a constant-ratio.

\section{Pseudo-Galilean geometry}

In this section, we introduce the basic concepts, familiar definitions and
notations on pseudo-Galilean space which are needed throughout this study.
The pseudo-Galilean geometry is one of the real Cayley-Klein geometries of
projective signature (0,0,+,-). The absolute of the pseudo-Galilean geometry
is an ordered triple $\{w,$ $f,$ $I\}$ where $w$ is the ideal (absolute)
plane, $f$ is a line in $w$ and $I$ is the fixed hyperbolic involution of
points of $f$ \cite{12}. The geometry of the pseudo-Galilean space is
similar (but not the same) to the Galilean space which is presented in \cite%
{11}. The scalar product and cross product of two vectors $\mathbf{x}%
=(x_{1},y_{1},z_{1})$ and $\mathbf{y}=(x_{2},y_{2},z_{2})$ in $G_{3}^{1}$
are respectively defined by:%
\begin{equation*}
g(\mathbf{x,y})=\left\{
\begin{array}{c}
x_{1}x_{2},\text{ \ \ \ \ \ \ \ \ \ }if\text{ }x_{1}\neq 0\vee x_{2}\neq 0
\\
y_{1}y_{2}-z_{1}z_{2}\text{ \ \ \ \ }if\text{ }x_{1}=0\wedge x_{2}=0,%
\end{array}%
\right.
\end{equation*}

\begin{equation*}
\mathbf{x}\times \mathbf{y}=\left\vert
\begin{array}{ccc}
0 & -e_{2} & e_{3} \\
x_{1} & y_{1} & z_{1} \\
x_{2} & y_{2} & z_{2}%
\end{array}%
\right\vert .
\end{equation*}

Also the length of the vector $\mathbf{x}=(x,y,z)$ is given by

\begin{equation}
\left\Vert \mathbf{x}\right\Vert =\left\{
\begin{array}{c}
\text{ \ \ \ \ \ \ }x\text{ \ \ \ \ \ \ },if\text{\ }x\neq 0 \\
\sqrt{\left\vert y^{2}-z^{2}\right\vert }\text{ },if\text{\ }x=0%
\end{array}%
\right.
\end{equation}

The group of motions of the pseudo-Galilean$\ G_{3}^{1}$ is a six-parameter
group given (in affine coordinates) by%
\begin{eqnarray*}
\bar{x} &=&a+x, \\
\bar{y} &=&b+cx+y\cosh \varphi +z\sinh \varphi , \\
\bar{z} &=&d+ex+y\sinh \varphi +z\cosh \varphi .
\end{eqnarray*}

It leaves invariant the pseudo-Galilean length of the vector \cite{12}%
. According to the motion group in pseudo-Galilean space, a vector $\mathbf{x%
}(x,y,z)$ is said to be non isotropic if $x\neq 0$. All unit non-isotropic
vectors are of the form $(1,y,z)$. For isotropic vectors $x=0$ holds. There
are four types of isotropic vectors: spacelike $(y^{2}-z^{2})>0$, timelike $%
(y^{2}-z^{2})<0$ and two types of lightlike $(y=\pm z)$ vectors. A
non-lightlike isotropic vector is a unit vector if $y^{2}-z^{2}=\pm 1$.

A trihedron $(T_{o};e_{1},e_{2},e_{3})$ with a proper origin $%
T_{o}(x_{o},y_{o},z_{o}):(T_{o};x_{o}:y_{o}:z_{o});$ is orthonormal in
pseudo-Galilean sense if the vectors $e_{1},e_{2},e_{3}$ are of the
following form: $e_{1}=(1,y_{1},z_{1})$, $e_{2}=(0,y_{2},z_{2})$ and $%
e_{3}=(0,\varepsilon z_{2},\varepsilon y_{2})$ with $y^{2}-z^{2}=\delta $,
where $\varepsilon ,$ $\delta $ is $+1$ or $-1$. Such trihedron $%
(T_{o};e_{1},e_{2},e_{3})$ is called positively oriented if for its vectors $%
det(e_{1},e_{2},e_{3})$ holds; that is if $y^{2}-z^{2}=\varepsilon $.

Let $\alpha (t):I\subset R\rightarrow G_{3}^{1}$ be a curve given first by $%
\alpha (t)=(x(t),y(t),z(t))$, where $x(t),y(t),z(t)\in C^{3}$ (the set of
three-times continuously differentiable functions) and $t$ run through a
real interval \cite{12}.

\begin{definition}
A curve $\alpha $ given by $\alpha (t)=(x(t),y(t),z(t))$ is admissible if $%
\dot{x}(t)\neq 0$.
\end{definition}

If $\alpha $ is taken as%
\begin{equation}
\alpha (x)=(x,y(x),z(x)),
\end{equation}

with the condition that
\begin{equation}
y^{\prime \prime 2}(x)-z^{\prime \prime 2}(x)\neq 0,
\end{equation}

then the parameter of arc-length $s$\ is defined by%
\begin{equation}
ds=\left\vert \dot{x}(t)dt\right\vert =dx.
\end{equation}

For simplicity, we assume $ds=dx$ and $s=x$ as the arc-length of the curve $%
\alpha $ \cite{12}. The vector%
\begin{equation*}
T(s)=\alpha ^{\prime }(s),
\end{equation*}%
is called the tangential unit vector of the curve $\alpha $ in a point $P(s)$%
. Also the unit vector%
\begin{equation}
N(s)=\frac{\alpha ^{\prime \prime }(s)}{\sqrt{\left\vert y^{\prime \prime
2}(s)-z^{\prime \prime 2}(s)\right\vert }},
\end{equation}%
is called the principal normal vector of the curve $\alpha $ in the point $P$%
. The binormal vector of the given curve in the point $P$ is given by%
\begin{equation}
B(s)=\frac{(0,\varepsilon z^{\prime \prime }(s),\varepsilon y^{\prime \prime
}(s))}{\sqrt{\left\vert y^{\prime \prime 2}(s)-z^{\prime \prime
2}(s)\right\vert }}.
\end{equation}

It is orthogonal in pseudo-Galilean sense to the osculating plane of $\alpha
$ spanned by the vectors $\alpha ^{\prime }(s)$ and $\alpha ^{\prime \prime
}(s)$ in the same point. The curve $\alpha $ given by (2.2) is spacelike
(resp. timelike) if $N(s)$ is a timelike (resp. spacelike) vector. The
principal normal vector or simply normal is spacelike if $\varepsilon =+1$
and timelike if $\varepsilon =-1$. Here $\varepsilon $ $=+1$ or $-1$ is
chosen by the criterion $det(T,N,B)=1$. That means%
\begin{equation}
\left\vert y^{\prime \prime 2}(s)-z^{\prime \prime 2}(s)\right\vert
=\varepsilon (y^{\prime \prime 2}(s)-z^{\prime \prime 2}(s)).
\end{equation}

By the above construction, the following can be summarized \cite{12}.

\begin{definition}
In each point of an admissible curve in $G_{3}^{1}$, the associated
orthonormal (in pseudo-Galilean sense) trihedron $\{T(s),N(s),B(s)\}$ can be
defined. This trihedron is called pseudo-Galilean Frenet trihedron.
\end{definition}

For the pseudo-Galilean Frenet trihedron of an admissible curve $\alpha $,
the following derivative Frenet formulas are true \cite{12}.

\begin{eqnarray}
T^{\prime } &=&\kappa N,  \notag \\
N^{\prime } &=&\tau B, \\
B^{\prime } &=&\tau N.  \notag
\end{eqnarray}

The spacelike $T$, the timelike $N$ and the spacelike $B$ are called the
vectors of the tangent, principal normal and the binormal line,
respectively. The function $\kappa $ is the pseudo-Galilean curvature given
by%
\begin{equation}
\kappa (s)=\sqrt{\left\vert y^{\prime \prime 2}(s)-z^{\prime \prime
2}(s)\right\vert }.
\end{equation}

And $\tau $ is the pseudo-Galilean torsion of $\alpha $ defined by%
\begin{equation}
\tau (s)=\frac{y^{\prime \prime }(s)z^{\prime \prime \prime }(s)-y^{\prime
\prime \prime }(s)z^{\prime \prime }(s)}{\kappa ^{2}(s)}.
\end{equation}

It can be written as%
\begin{equation}
\tau (s)=\frac{\det (\alpha ^{\prime }(s),\alpha ^{\prime \prime }(s),\alpha
^{\prime \prime \prime }(s))}{\kappa ^{2}(s)}.
\end{equation}

In a matrix form, the Serret-Frenet equations (2.8) are%
\begin{equation*}
\frac{d}{ds}\left[
\begin{array}{c}
T \\
N \\
B%
\end{array}%
\right] =\left[
\begin{array}{ccc}
0 & \kappa & 0 \\
0 & 0 & \tau \\
0 & \tau & 0%
\end{array}%
\right] \left[
\begin{array}{c}
T \\
N \\
B%
\end{array}%
\right] .
\end{equation*}

The Pseudo-Galilean sphere is defined by%
\begin{equation*}
S_{\pm }^{^{2}}=\{u\in G_{3}^{1}:g(u,u)=\pm r^{2}\},
\end{equation*}

where $r$ is the radius of the sphere.

\section{Spacelike curves of constant-ratio in $G_{3}^{1}$}

Let $\alpha :I\subset R\rightarrow G_{3}^{1}$ be an arbitrary spacelike
admissible curve. In the light of which introduced in \cite{13,14,15,16,17}, we
consider the following:

\begin{theorem}
The position vector of $\alpha $ with curvatures $\kappa (s)$ and $\tau
(s)\neq 0$ with respect to the Frenet frame in the pseudo-Galilean space $%
G_{3}^{1}$ can be written as:%
\begin{eqnarray}
\alpha &=&(s+c_{o})T+e^{-\int \tau (s)ds}\left( c_{1}\text{ }e^{2\int \tau
(s)ds}+e^{2\int \tau (s)ds}\int \frac{\kappa (s)(s+c_{o})}{2}e^{-\int \tau
(s)ds}ds\right.  \notag \\
&&\left. -\int \frac{\kappa (s)(s+c_{o})}{2}e^{\int \tau
(s)ds}ds+c_{2}\right) N+e^{-\int \tau (s)ds}\left( c_{1}\text{ }e^{2\int
\tau (s)ds}\right. \\
&&\left. +e^{2\int \tau (s)ds}\int \frac{\kappa (s)(s+c_{o})}{2}e^{-\int
\tau (s)ds}ds+\int \frac{\kappa (s)(s+c_{o})}{2}e^{\int \tau
(s)ds}ds-c_{2}\right) B.  \notag
\end{eqnarray}

where $c_{o}$, $c_{1}$ and $c_{2}$ are arbitrary constants.
\end{theorem}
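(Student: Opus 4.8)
The plan is to write the position vector $\alpha$ in the Frenet frame as in equation (2.2), namely
\[
\alpha(s)=m_0(s)T+m_1(s)N+m_2(s)B,
\]
and to determine the three coefficient functions $m_0,m_1,m_2$ by differentiating and matching against $\alpha'(s)=T$. Differentiating the expansion and substituting the Frenet formulas (2.8), which give $T'=\kappa N$, $N'=\tau B$, and $B'=\tau N$, I obtain
\[
\alpha'=m_0'\,T+\bigl(m_1'+m_0\kappa+m_2\tau\bigr)N+\bigl(m_2'+m_1\tau\bigr)B.
\]
Comparing coefficients with $\alpha'=T=1\cdot T+0\cdot N+0\cdot B$ yields the system
\[
m_0'=1,\qquad m_1'+m_0\kappa+m_2\tau=0,\qquad m_2'+m_1\tau=0.
\]

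The first equation integrates immediately to $m_0=s+c_0$. The remaining two form a coupled linear first-order system in $m_1$ and $m_2$ with the known forcing term $m_0\kappa=(s+c_0)\kappa$. The cleanest way to decouple it is to form the sum and difference $u=m_1+m_2$ and $v=m_1-m_2$; since the torsion enters symmetrically through $B'=\tau N$ and $N'=\tau B$ (the hyperbolic, pseudo-Galilean analogue of the elliptic Euclidean coupling), the combinations satisfy scalar equations of the form $u'\mp\tau u=-(s+c_0)\kappa/2$ and $v'\pm\tau v=-(s+c_0)\kappa/2$. Each is solved by the integrating factor $e^{\mp\int\tau\,ds}$, producing the $e^{\pm\int\tau(s)ds}$ weights and the two nested integrals $\int\frac{\kappa(s)(s+c_0)}{2}e^{\mp\int\tau(s)ds}\,ds$ that appear in the statement, together with the constants of integration $c_1$ and $c_2$.

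Recovering $m_1=\tfrac12(u+v)$ and $m_2=\tfrac12(u-v)$ and collecting terms reproduces the displayed expressions (3.1) for the $N$- and $B$-components, where the overall factor $e^{-\int\tau(s)ds}$ has been pulled out front. The main obstacle is purely bookkeeping: one must be careful about the sign of $\tau$ in each integrating factor and about which constant ($c_1$ or $c_2$) and which sign of the integral attaches to $u$ versus $v$, so that after back-substitution the $N$-component carries $-\int\frac{\kappa(s)(s+c_0)}{2}e^{\int\tau\,ds}ds$ and $+c_2$ while the $B$-component carries $+\int\frac{\kappa(s)(s+c_0)}{2}e^{\int\tau\,ds}ds$ and $-c_2$, exactly matching (3.1). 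No genuine analytic difficulty arises beyond verifying that this candidate satisfies the system and the initial expansion; one may either derive it as above or simply differentiate (3.1) and check that $\alpha'=T$ holds identically.
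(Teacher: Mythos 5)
Your setup coincides with the paper's: both expand $\alpha=m_{0}T+m_{1}N+m_{2}B$, differentiate, use (2.8), and arrive at the system $m_{0}'=1$, $m_{1}'+\kappa m_{0}+\tau m_{2}=0$, $m_{2}'+\tau m_{1}=0$. Your solution method, however, is genuinely different from the paper's: the paper changes variable to $t=\int\tau\,ds$, eliminates $m_{1}=-\dot m_{2}$, and solves the second-order equation $\ddot m_{2}-m_{2}=\kappa(s+c_{0})/\tau$, whereas you decouple the first-order system by $u=m_{1}+m_{2}$, $v=m_{1}-m_{2}$. The decoupling idea is sound, but note that the correct scalar equations are $u'+\tau u=-\kappa(s+c_{0})$ and $v'-\tau v=-\kappa(s+c_{0})$; the factor $\tfrac12$ you placed in the forcing terms does not belong there (it only appears later, through $m_{1}=\tfrac12(u+v)$, $m_{2}=\tfrac12(u-v)$).

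The genuine gap is that the one step carrying all the content of the theorem --- ``collecting terms reproduces the displayed expressions (3.1)'' --- is asserted rather than performed, and it is in fact false. The integrating factors give
\begin{equation*}
u=e^{-\int\tau\,ds}\left(C_{1}-\int\kappa(s+c_{0})e^{\int\tau\,ds}ds\right),\qquad
v=e^{\int\tau\,ds}\left(C_{2}-\int\kappa(s+c_{0})e^{-\int\tau\,ds}ds\right),
\end{equation*}
so in $m_{1}=\tfrac12(u+v)$ \emph{both} integral terms carry a minus sign, and in $m_{2}=\tfrac12(u-v)$ the signs are $(+,-)$; by contrast, (3.1) has the pattern $(+,-)$ in the $N$-component and $(+,+)$ in the $B$-component, and no renaming of the arbitrary constants can reconcile these, since the integral terms are fixed functions. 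Your proposed fallback --- ``simply differentiate (3.1) and check that $\alpha'=T$ holds identically'' --- would actually expose the problem rather than settle it: already the homogeneous part of (3.1) (set $\kappa=0$), namely $m_{1}=c_{1}e^{\int\tau\,ds}+c_{2}e^{-\int\tau\,ds}$ and $m_{2}=c_{1}e^{\int\tau\,ds}-c_{2}e^{-\int\tau\,ds}$, gives $m_{2}'+\tau m_{1}=2\tau\left(c_{1}e^{\int\tau\,ds}+c_{2}e^{-\int\tau\,ds}\right)\neq 0$, so (3.1) does not satisfy the third Frenet relation. (The paper's own derivation commits a matching sign slip when writing the general solution of its equation (3.5), which is why its formula (3.1) pairs the constants the wrong way; the correct homogeneous pattern pairs equal $e^{-\int\tau\,ds}$ coefficients with opposite $e^{\int\tau\,ds}$ coefficients.) So your method, executed carefully, proves a corrected version of the formula; as written, your proposal glosses over exactly the computation in dispute and claims a match that does not hold.
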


\begin{proof}
Let $\alpha $ be an arbitrary spacelike curve in the pseudo-Galilean space $%
G_{3}^{1}$, then we may express its position vector as%
\begin{equation*}
\alpha (s)=m_{o}(s)T(s)+m_{1}(s)N(s)+m_{2}(s)B(s).
\end{equation*}

Differentiating the last equation with respect to the arc-length parameter $%
s $ and using the Serret-Frenet equations (2.8), we obtain%
\begin{eqnarray*}
\alpha ^{\prime }(s) &=&m_{o}^{\prime }(s)T(s)+(m_{1}^{\prime }(s)+\kappa
(s)m_{o}(s)+\tau (s)m_{2}(s))N(s) \\
&&+(m_{2}^{\prime }(s)+\tau (s)m_{1}(s))B(s).
\end{eqnarray*}%
It follows that%
\begin{eqnarray}
m_{o}^{\prime }(s) &=&1,  \notag \\
m_{1}^{\prime }(s)+\kappa (s)m_{o}(s)+\tau (s)m_{2}(s) &=&0, \\
m_{2}^{\prime }(s)+\tau (s)m_{1}(s) &=&0.  \notag
\end{eqnarray}

From the first equation of (3.2), we have%
\begin{equation}
m_{o}(s)=s+c_{o}.
\end{equation}

It is useful to change the variable $s$ by the variable $t=\int \tau (s)ds$.
So that all functions of $s$ will transform to functions of $t$. Here, we
will use dot to denote derivation with respect to $t$ (prime denotes
derivative with respect to $s$). The third equation of (3.2) can be written
as%
\begin{equation}
m_{1}(t)=-\dot{m}_{2}(t),\text{ \ }where\text{ }\dot{m}_{2}=\frac{dm_{2}}{dt}%
.
\end{equation}

Substituting the above equation in the second equation of (3.2), we have the
following equation for $m_{2}(t)$

\begin{equation}
\ddot{m}_{2}(t)-m_{2}(t)=\frac{y(t)\kappa (t)}{\tau (t)},\text{ \ \ \ }%
y(t)=m_{o}(s)=s+c_{o}.
\end{equation}

The general solution of this equation is%
\begin{equation}
m_{2}(t)=e^{-t}\left[ c_{1}\text{ }e^{2t}+e^{2t}\int \frac{\kappa (t)y(t)}{%
2\tau (t)}e^{-t}dt+\int \frac{\kappa (t)y(t)}{2\tau (t)}e^{t}dt-c_{2}\right]
,
\end{equation}

where $c_{1}$ and $c_{2}$ are arbitrary constants. From (3.4) and (3.6), the
function $m_{1}(t)$ is obtained%
\begin{equation}
m_{1}(t)=e^{-t}\left[ c_{1}\text{ }e^{2t}+e^{2t}\int \frac{\kappa (t)y(t)}{%
2\tau (t)}e^{-t}dt-\int \frac{\kappa (t)y(t)}{2\tau (t)}e^{t}dt+c_{2}\right]
.
\end{equation}

Hence the above equations take the following forms%
\begin{equation}
m_{1}=e^{-\int \tau (s)ds}\left[ c_{1}\text{ }e^{2\int \tau (s)ds}+e^{2\int
\tau (s)ds}\int \frac{(s+c_{o})\kappa }{2}e^{-\int \tau (s)ds}ds-\int \frac{%
(s+c_{o})\kappa }{2}e^{\int \tau (s)ds}ds+c_{2}\right] ,
\end{equation}%
\begin{equation}
m_{2}=e^{-\int \tau (s)ds}\left[ c_{1}\text{ }e^{2\int \tau (s)ds}+e^{2\int
\tau (s)ds}\int \frac{(s+c_{o})\kappa }{2}e^{-\int \tau (s)ds}ds+\int \frac{%
(s+c_{o})\kappa }{2}e^{\int \tau (s)ds}ds-c_{2}\right] .
\end{equation}

Substituting from (3.3), (3.8) and (3.9) in (1.2), we obtain Eq.(3.1), so
the proof of the theorem is completed.
\end{proof}

\begin{theorem}
Let $\alpha :I\subset R\rightarrow G_{3}^{1}$ be a spacelike curve with $%
\kappa \neq 0$ and $\tau \neq 0$ in $G_{3}^{1}$. Then the position vector
and curvatures of $\alpha $\ satisfy a vector differential equation of third
order.
\end{theorem}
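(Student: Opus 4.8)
The plan is to reduce the claim to showing that the three coordinate functions $m_0,m_1,m_2$ of the position vector (with respect to the Frenet trihedron) satisfy a single third-order ordinary differential equation, which then lifts to a vector differential equation for $\alpha$ itself. First I would differentiate the position-vector expression $\alpha = m_0 T + m_1 N + m_2 B$ and apply the Serret--Frenet formulas (2.8) exactly as in the previous theorem, recovering the first-order system
\begin{eqnarray*}
m_0' &=& 1, \\
m_1' + \kappa\, m_0 + \tau\, m_2 &=& 0, \\
m_2' + \tau\, m_1 &=& 0.
\end{eqnarray*}
The strategy is to eliminate two of the three unknowns so that a single scalar equation of order three emerges, and then to reassemble it invariantly.

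Next I would perform the elimination. From the first equation $m_0 = s + c_0$ is already known, so the genuine coupling is between $m_1$ and $m_2$ through the last two equations. Solving the third equation for $m_1 = -m_2'/\tau$ and substituting into the second produces a second-order equation in $m_2$ alone, with coefficients built from $\kappa$, $\tau$, and their derivatives. To obtain a statement purely about $\alpha$ and its invariants, I would instead differentiate the decomposition of $\alpha$ repeatedly: computing $\alpha'$, $\alpha''$, and $\alpha'''$ via the Frenet equations expresses each of these as explicit combinations of $T,N,B$ with coefficients that are polynomials in $\kappa,\tau$ and their derivatives. Because the Frenet frame is three-dimensional, any four of the vectors $\alpha,\alpha',\alpha'',\alpha'''$ must be linearly dependent over the ring of smooth functions, and this dependence is precisely the desired third-order vector differential equation
\[
\alpha''' + a_2(s)\,\alpha'' + a_1(s)\,\alpha' + a_0(s)\,\alpha = 0,
\]
with coefficients $a_i$ determined by $\kappa$, $\tau$, and their derivatives.

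To pin down the coefficients concretely, I would write $T = \alpha'$, then $\alpha'' = \kappa N$, and differentiate again to get $\alpha''' = \kappa' N + \kappa \tau B$; one further differentiation yields a fourth vector which I would express back in the $\{T,N,B\}$ basis and then solve the resulting $3\times 3$ linear system to read off the $a_i$. The hypotheses $\kappa \neq 0$ and $\tau \neq 0$ are exactly what guarantee that the relevant coefficient matrix is invertible, so the elimination is legitimate and the leading term $\alpha'''$ genuinely appears.

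The main obstacle I anticipate is bookkeeping rather than conceptual: the pseudo-Galilean Frenet equations are not skew-symmetric (note $B' = \tau N$ rather than $-\tau N$), so the sign structure differs from the Euclidean and Lorentzian cases, and one must track the factor $\varepsilon$ and the non-standard inner product carefully when verifying linear independence and when normalizing. I would therefore double-check the derivative computations against the matrix form of (2.8) before extracting the final coefficients, ensuring that no spurious cancellation masks the true order of the equation.
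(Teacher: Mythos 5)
Your overall strategy---expressing successive derivatives in the Frenet basis and eliminating---is the right kind of idea, but the specific dependence you assert contains a genuine gap. You claim that pointwise linear dependence of the four vectors $\alpha,\alpha',\alpha'',\alpha'''$ yields a monic relation
\[
\alpha''' + a_2(s)\,\alpha'' + a_1(s)\,\alpha' + a_0(s)\,\alpha = 0,
\]
and that $\kappa\neq 0$, $\tau\neq 0$ guarantee the invertibility needed to solve for the $a_i$. Neither claim holds. Writing $\alpha = m_0T+m_1N+m_2B$ and using $\alpha'=T$, $\alpha''=\kappa N$, $\alpha'''=\kappa'N+\kappa\tau B$, the three component equations for the monic form are $a_1+a_0m_0=0$, $\kappa'+a_2\kappa+a_0m_1=0$, and $\kappa\tau+a_0m_2=0$, so $a_0=-\kappa\tau/m_2$. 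Thus the relation exists only where the binormal component $m_2$ of the position vector is nonzero: at a point with $m_2=0$, the span of $\{\alpha,\alpha',\alpha''\}$ is only $\mathrm{Sp}\{T,N\}$ while $\alpha'''$ has the nonzero $B$-component $\kappa\tau$, so no such relation can hold there. The condition $m_2\neq 0$ is not implied by $\kappa\neq0,\tau\neq0$; what those hypotheses do give is invertibility of the matrix expressing $\alpha',\alpha'',\alpha'''$ through $T,N,B$ (determinant $\kappa^2\tau$), which lets you solve for $\alpha$ in terms of its derivatives, not for $\alpha'''$. Note also that your $a_i$ would involve $m_0,m_1,m_2$, i.e.\ depend on the choice of origin rather than only on the curvatures, and there is an internal inconsistency: your dependence set is $\{\alpha,\alpha',\alpha'',\alpha'''\}$, yet you then invoke ``one further differentiation,'' which produces $\alpha''''$, a vector not in that set.

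The repair is to drop $\alpha$ from the dependence set entirely, which is exactly what the paper does. Since $\alpha''=\kappa N$, $\alpha'''=\kappa'N+\kappa\tau B$, and $\alpha''''=(\kappa''+\kappa\tau^2)N+(2\kappa'\tau+\kappa\tau')B$ all lie in the plane spanned by $N$ and $B$, these three are dependent, and the elimination now only requires inverting $\begin{pmatrix}\kappa & 0\\ \kappa' & \kappa\tau\end{pmatrix}$, whose determinant $\kappa^{2}\tau$ is nonzero precisely by the hypotheses. In the paper's formulation (with $T=\alpha'$): substitute $N=T'/\kappa$ and $B=\frac{1}{\tau}\left[\left(\frac{1}{\kappa}\right)'T'+\frac{1}{\kappa}T''\right]$ into the Frenet relation $B'=\tau N$ to obtain
\[
\frac{1}{\tau\kappa}T'''+\left[\frac{2}{\tau}\left(\frac{1}{\kappa}\right)'-\left(\frac{1}{\tau}\right)'\frac{1}{\kappa}\right]T''+\left[\frac{1}{\tau}\left(\left(\frac{1}{\kappa}\right)''-\frac{\tau^{2}}{\kappa}\right)-\left(\frac{1}{\tau}\right)'\left(\frac{1}{\kappa}\right)'\right]T'=0,
\]
a third-order vector differential equation whose coefficients involve only $\kappa$, $\tau$, and their derivatives, valid wherever $\kappa\neq0$ and $\tau\neq0$. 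Your computation of $\alpha',\alpha'',\alpha'''$ is correct and your caution about the non-skew-symmetric Frenet matrix ($B'=\tau N$) is well placed; the argument only needs to be rerouted so that the highest derivative is eliminated against lower derivatives of $T$, never against the position vector itself.
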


\begin{proof}
Let $\alpha :I\subset R\rightarrow G_{3}^{1}$ be a spacelike curve with
curvatures $\kappa \neq 0$ and $\tau \neq 0$ in $G_{3}^{1}$. From Frenet
equations (2.8), one can write%
\begin{eqnarray}
N &=&\frac{T^{\prime }}{\kappa }, \\
B &=&\frac{N^{\prime }}{\tau }.
\end{eqnarray}

Substituting (3.10) in the third equation of (2.8), we get%
\begin{equation}
B^{\prime }=\frac{\tau }{\kappa }T^{\prime }.
\end{equation}

After differentiating (3.10) with respect to $s$ and substituting in (3.11),
we find%
\begin{equation}
B=\frac{1}{\tau }\left[ \left( \frac{1}{\kappa }\right) ^{\prime }T^{\prime
}+\left( \frac{1}{\kappa }\right) T^{\prime \prime }\right] .
\end{equation}

Similarly, taking the differentiation of (3.13) and equalize with the third
equation of (2.8), we obtain%
\begin{equation}
\frac{1}{\tau \kappa }T^{\prime \prime \prime }+\left[ 2\frac{1}{\tau }%
\left( \frac{1}{\kappa }\right) ^{\prime }-\left( \frac{1}{\tau }\right)
^{\prime }\frac{1}{\kappa }\right] T^{\prime \prime }+\left[ \frac{1}{\tau }%
\left( \left( \frac{1}{\kappa }\right) ^{\prime \prime }-\frac{\tau ^{2}}{%
\kappa }\right) -\left( \frac{1}{\tau }\right) ^{\prime }\left( \frac{1}{%
\kappa }\right) ^{\prime }\right] T^{\prime }=0.
\end{equation}
It proves the theorem.
\end{proof}

\begin{theorem}
The position vector $\alpha (s)$ of a spacelike admissible curve with
curvature $\kappa (s)$ and torsion $\tau (s)$ in the Pseudo-Galilean space $%
G_{3}^{1}$ is computed from the intrinsic representation form%
\begin{equation*}
\alpha (s)=\left( s,-\int \left[ \int \kappa (s)\sinh [\int \tau (s)ds]ds%
\right] ds,\int \left[ \int \kappa (s)\cosh [\int \tau (s)ds]ds\right]
ds\right) ,
\end{equation*}%
with tangent, principal normal and binormal vectors respectively, given by%
\begin{eqnarray*}
T(s) &=&\left( 1,-\int \kappa (s)\sinh [\int \tau (s)ds]ds,\int \kappa
(s)\cosh [\int \tau (s)ds]ds\right) , \\
N(s) &=&\left( 0,-\sinh [\int \tau (s)ds],\cosh [\int \tau (s)ds]\right) , \\
B(s) &=&\left( 0,-\cosh [\int \tau (s)ds],\sinh [\int \tau (s)ds]\right) .
\end{eqnarray*}
\end{theorem}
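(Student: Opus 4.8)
The plan is to read off the structure of $\alpha$ directly from the admissibility condition and the Frenet formulas (2.8), rather than through the quadrature machinery of Theorem 3.1. Since $\alpha$ is admissible and written as in (2.3) with $s=x$, the tangent is $T(s)=\alpha'(s)=(1,y'(s),z'(s))$, so $T$ has first coordinate identically $1$; and by the definitions (2.6)--(2.7) both $N$ and $B$ are isotropic, i.e. their first coordinate vanishes. Writing $N=(0,N_2,N_3)$ and $B=(0,B_2,B_3)$, the entire problem collapses to finding these four scalar functions and then integrating twice.

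First I would solve the coupled linear system coming from the last two Frenet equations $N'=\tau B$ and $B'=\tau N$, read componentwise as $N_2'=\tau B_2$, $N_3'=\tau B_3$, $B_2'=\tau N_2$, $B_3'=\tau N_3$. The decoupling trick is to pass to sums and differences: for $i=2,3$ one gets $(N_i+B_i)'=\tau(N_i+B_i)$ and $(N_i-B_i)'=-\tau(N_i-B_i)$. With $\theta(s)=\int\tau(s)\,ds$ these integrate immediately to multiples of $e^{\theta}$ and $e^{-\theta}$, so each of $N_i,B_i$ is a linear combination of $\sinh\theta$ and $\cosh\theta$ carrying integration constants.

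Next I would pin down those constants using the pseudo-Galilean orthonormality of the Frenet trihedron. Because $\alpha$ is spacelike, $N$ is timelike and $B$ is spacelike, so in the isotropic scalar product $g(N,N)=N_2^2-N_3^2=-1$, $g(B,B)=B_2^2-B_3^2=+1$, $g(N,B)=N_2B_2-N_3B_3=0$, together with the orientation criterion $\det(T,N,B)=1$. Imposed at every $s$, these constraints force the coefficient of $e^{2\theta}$ and of $e^{-2\theta}$ to vanish and fix the remaining sign choices, collapsing the general solution exactly to $N=(0,-\sinh\theta,\cosh\theta)$ and $B=(0,-\cosh\theta,\sinh\theta)$; the only surviving freedom is a continuous rescaling $e^{\theta}\mapsto a\,e^{\theta}$, i.e. an additive shift in $\theta$, which is absorbed into the choice of lower limit in $\int\tau\,ds$. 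I would then confirm these expressions satisfy $N'=\tau B$ and $B'=\tau N$ by direct differentiation, using $\theta'=\tau$.

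Finally, with $N$ in hand, the first Frenet equation $T'=\kappa N$ gives the second and third components of $T$ by a single integration, producing $T=\bigl(1,-\int\kappa\sinh\theta\,ds,\int\kappa\cosh\theta\,ds\bigr)$, and one further integration of $T=\alpha'$ yields the stated formula for $\alpha(s)$ (the integration constants being set to zero to obtain the intrinsic canonical position). The only genuinely delicate step is the middle one: one must check that the two normalization conditions, the orthogonality condition, and the orientation condition are mutually consistent and leave precisely the claimed hyperbolic solution, rather than over-determining or under-determining the constants; the remaining integrations are routine.
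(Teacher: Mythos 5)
The paper states this theorem without any proof --- it sits bare between Theorem 3.2 and the constant-ratio definitions, presumably adapted from its Galilean analogue (cf. \cite{13}) --- so there is no in-paper argument to compare yours against; your proposal supplies a proof the paper omits. Your route is correct, and it closes: the sum/difference decoupling gives $N_i+B_i=a_ie^{\theta}$, $N_i-B_i=b_ie^{-\theta}$ with $\theta=\int\tau\,ds$; the conditions $g(N,N)=-1$, $g(B,B)=+1$, $g(N,B)=0$ annihilate the $e^{\pm2\theta}$ coefficients and fix the cross terms; and integrating $T'=\kappa N$, then $\alpha'=T$, gives the displayed formulas. One can also confirm that the reconstructed curve genuinely has the prescribed invariants, since $y''z'''-y'''z''=\kappa^{2}\tau$ and $y''^{2}-z''^{2}=-\kappa^{2}<0$ (so the curve is indeed spacelike in the paper's convention that $N$ be timelike).

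Two steps need tightening. First, your claim that the constraints collapse the solution \emph{exactly} to the stated $N$ and $B$ is too strong: every condition you impose, including the orientation condition $\det(T,N,B)=N_{2}B_{3}-N_{3}B_{2}=1$, is invariant under the simultaneous negation $(N,B)\mapsto(-N,-B)$, because the determinant is bilinear in $(N,B)$. So two frames survive the normalization, producing the stated curve and its mirror image under $(x,y,z)\mapsto(x,-y,-z)$; both have curvature $\kappa$ and torsion $\tau$. This is harmless --- a curve is determined by $(\kappa,\tau)$ only up to pseudo-Galilean congruence, and the theorem exhibits one representative --- but your proof should acknowledge the residual reflection rather than assert uniqueness. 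Second, concluding that the coefficients of $e^{2\theta}$ and $e^{-2\theta}$ must vanish because the constraints hold at every $s$ tacitly assumes $\theta$ is non-constant, i.e. $\tau\not\equiv0$. Either dispose of the degenerate case separately ($\tau\equiv0$ makes $N$ and $B$ constant, and the formulas hold with $\theta$ constant), or impose orthonormality at a single point only and note that it propagates along the Frenet flow: the derivatives of $g(N,N)$, $g(B,B)$, $g(N,B)$ form a closed linear system that preserves orthonormal initial data. With these two repairs the argument is complete.
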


Now, similar to the Euclidean case \cite{18}, we consider the following
definition:

For each given $\alpha :I\subset R\rightarrow G_{3}^{1}$,\ there is a
natural orthogonal decomposition of the position vector $\alpha $ at each
point on $\alpha $; namely,%
\begin{equation}
\alpha =\alpha ^{T}+\alpha ^{N},
\end{equation}%
where $\alpha ^{T}$\ and $\alpha ^{N}$\ denote the tangential and normal
components of $\alpha $\ at the point, respectively. Let $\left\Vert \alpha
^{T}\right\Vert $\ and $\left\Vert \alpha ^{N}\right\Vert $ denote the
length of $\alpha ^{T}$ and $\alpha ^{N}$, respectively. In what follows we
introduce the notion of constant-ratio curves

\begin{definition}
A curve $\alpha $ of a pseudo-Galilean space $G_{3}^{1}$ is said to be of $%
constant$-$ratio$ curve if the ratio $\left\Vert \alpha ^{T}\right\Vert
:\left\Vert \alpha ^{N}\right\Vert $ is constant on $\alpha (I)$.
\end{definition}

Clearly, for a constant-ratio curve in pseudo-Galilean space $G_{3}^{1}$, we
have%
\begin{equation}
\frac{m_{o}^{2}}{m_{2}^{2}-m_{1}^{2}}=c_{3},
\end{equation}

for some constant $c_{3}$.

\begin{definition}
Let $\alpha :I\subset R\rightarrow G_{3}^{1}$ be an admissible curve in $%
G_{3}^{1}$. If $\left\Vert \alpha ^{T}\right\Vert $ is constant, then $%
\alpha $\ is called a $T-constant$ curve. Further, a $T-constant$ curve $%
\alpha $ is called of first kind if $\left\Vert \alpha ^{T}\right\Vert =0$,
otherwise second kind \cite{19}.
\end{definition}

\begin{definition}
Let $\alpha :I\subset R\rightarrow G_{3}^{1}$ be an admissible curve in $%
G_{3}^{1}$. If $\left\Vert \alpha ^{N}\right\Vert $ is constant, then $%
\alpha $\ is called a $N-constant$ curve. For a $N-constant$ curve $\alpha $%
, either $\left\Vert \alpha ^{N}\right\Vert =0$ or $\left\Vert \alpha
^{N}\right\Vert =\mu $ for some non-zero smooth function $\mu $. Further, a $%
N-constant$ curve $\alpha $ is called of first kind if $\left\Vert \alpha
^{N}\right\Vert =0$, otherwise second kind \cite{19}.
\end{definition}

For a $N-constant$ curve $\alpha $\ in $G_{3}^{1}$, we can write%
\begin{equation}
\left\Vert \alpha ^{N}(s)\right\Vert ^{2}=m_{2}^{2}(s)-m_{1}^{2}(s)=c_{4},
\end{equation}

where $c_{4}$ is a real constant.

In what follows, we characterize the admissible curves in terms of their
curvature functions $m_{i}(s)$ and give the necessary and sufficient
conditions for these curves to be $T-constant$ or $N-constant$ curves.

\begin{theorem}
Let $\alpha :I\subset R\rightarrow G_{3}^{1}$ be a spacelike curve in $%
G_{3}^{1}$. Then $\alpha $\ is of constant-ratio if and only if%
\begin{equation*}
\left( \frac{\kappa ^{\prime }-\kappa ^{3}c_{3}(s+c_{o})}{c_{3}\kappa
^{2}\tau }\right) ^{\prime }=\frac{-\tau }{c_{3}\kappa }.
\end{equation*}
\end{theorem}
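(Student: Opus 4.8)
The plan is to work throughout with the three coordinate functions $m_{o}, m_{1}, m_{2}$ of the position vector and with the linear system (3.2) that they satisfy for every spacelike admissible curve, together with $m_{o}=s+c_{o}$ from (3.3). By (3.16), the curve is of constant-ratio precisely when $m_{o}^{2}=c_{3}(m_{2}^{2}-m_{1}^{2})$ for some constant $c_{3}$, so I would take this algebraic relation as the object to be converted into the stated differential equation, using the two nontrivial equations $m_{1}^{\prime}=-\kappa m_{o}-\tau m_{2}$ and $m_{2}^{\prime}=-\tau m_{1}$ of (3.2) to eliminate $m_{1}$ and $m_{2}$ in favour of $\kappa$ and $\tau$.

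For the forward implication I would differentiate $m_{o}^{2}=c_{3}(m_{2}^{2}-m_{1}^{2})$ in $s$ and substitute the two Frenet relations above. The torsion terms in $m_{2}m_{2}^{\prime}-m_{1}m_{1}^{\prime}$ cancel, leaving $m_{2}m_{2}^{\prime}-m_{1}m_{1}^{\prime}=\kappa m_{o}m_{1}$, so the differentiated identity reads $m_{o}=c_{3}\kappa m_{o}m_{1}$; dividing by the nonvanishing $m_{o}$ gives $\kappa m_{1}=1/c_{3}$, that is $m_{1}=1/(c_{3}\kappa)$. Feeding this into $m_{1}^{\prime}=-\kappa m_{o}-\tau m_{2}$ and solving for $m_{2}$ yields $m_{2}=(\kappa^{\prime}-c_{3}\kappa^{3}m_{o})/(c_{3}\kappa^{2}\tau)$, which is exactly the bracketed expression of the theorem. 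Substituting $m_{1}=1/(c_{3}\kappa)$ and this $m_{2}$ into the remaining equation $m_{2}^{\prime}=-\tau m_{1}$ of (3.2) then gives $m_{2}^{\prime}=-\tau/(c_{3}\kappa)$, which is the asserted equation.

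For the converse I would read the same chain in reverse, the key observation being that the left-hand bracket in the theorem equals $m_{2}$ and its right-hand side equals $-\tau m_{1}$ once $m_{1}=1/(c_{3}\kappa)$; thus the stated equation is nothing but the third relation $m_{2}^{\prime}=-\tau m_{1}$ of (3.2) evaluated at this value of $m_{1}$. Checking that $m_{1}=1/(c_{3}\kappa)$ is automatically compatible with the second relation of (3.2) (the same cancellation that produced $m_{2}$ guarantees it), one recovers $\kappa m_{1}=1/c_{3}$, whereupon the derivative of $m_{o}^{2}-c_{3}(m_{2}^{2}-m_{1}^{2})$ collapses to $2c_{3}m_{o}(1/c_{3}-\kappa m_{1})=0$. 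Hence $m_{o}^{2}-c_{3}(m_{2}^{2}-m_{1}^{2})$ is constant along $\alpha$.

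The hard part will be closing this last gap in the converse: the differentiated condition only yields $m_{o}^{2}-c_{3}(m_{2}^{2}-m_{1}^{2})=\mathrm{const}$, while constant-ratio in the sense of (3.16) demands this constant to vanish. I expect to remove it by identifying $c_{3}$ with the actual constant value of the ratio $\left\Vert\alpha^{T}\right\Vert^{2}:\left\Vert\alpha^{N}\right\Vert^{2}$, equivalently by fixing the integration constant through a single initial value of $m_{1}$, which kills the homogeneous freedom in the linear system (3.2). Throughout, the standing hypotheses $\kappa\neq 0$, $\tau\neq 0$ together with $m_{o}\neq 0$ are precisely what justify dividing by $\kappa$, $\tau$ and $m_{o}$ at each stage.
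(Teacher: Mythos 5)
Your forward implication is, step for step, the paper's own proof: differentiate the constant-ratio relation (3.16) to get $m_{2}m_{2}^{\prime}-m_{1}m_{1}^{\prime}=(s+c_{o})/c_{3}$ (the paper's (3.18)), use (3.2) to collapse the left side to $\kappa m_{o}m_{1}$, divide by $m_{o}$ to get $m_{1}=1/(c_{3}\kappa)$, solve the second equation of (3.2) for $m_{2}$, and read off the stated identity as the third equation $m_{2}^{\prime}=-\tau m_{1}$. In fact the paper stops at the formulas for $m_{1}$ and $m_{2}$ and says ``so we get the result''; your explicit final substitution supplies that last line correctly, and this half of your proposal is sound.

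The converse is where there is a genuine gap, and it is deeper than the integration-constant issue you flag at the end. In the converse, the hypothesis is an identity involving only $\kappa$, $\tau$, $c_{o}$, $c_{3}$; it carries no information about the curve's actual position-vector coefficients. Your ``key observation'' --- that the bracket equals $m_{2}$ and the right-hand side equals $-\tau m_{1}$ once $m_{1}=1/(c_{3}\kappa)$ --- presupposes exactly the identities that were \emph{derived from} the constant-ratio assumption in the forward direction; they are not available as hypotheses now, so the argument is circular. What the assumed ODE really gives is that the pair $\tilde{m}_{1}:=1/(c_{3}\kappa)$, $\tilde{m}_{2}:=(\kappa^{\prime}-c_{3}\kappa^{3}(s+c_{o}))/(c_{3}\kappa^{2}\tau)$ is \emph{one particular solution} of the linear system (3.2). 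The given curve's coefficients $(m_{1},m_{2})$ are some other solution of (3.2), and two solutions differ by a solution of the homogeneous system $h_{1}^{\prime}=-\tau h_{2}$, $h_{2}^{\prime}=-\tau h_{1}$ --- precisely the two-parameter freedom $(c_{1},c_{2})$ visible in Theorem 3.1. For a nonzero homogeneous part the cross terms $m_{2}h_{2}-m_{1}h_{1}$ destroy the constancy of $m_{o}^{2}/(m_{2}^{2}-m_{1}^{2})$; note also that the conclusion depends on the choice of origin while the hypothesis does not, so no purely intrinsic manipulation can force it. Finally, even if one grants $(m_{1},m_{2})=(\tilde{m}_{1},\tilde{m}_{2})$, your own flagged gap remains and your proposed fix does not close it: differentiation only yields $m_{2}^{2}-m_{1}^{2}=m_{o}^{2}/c_{3}+C$, and ``identifying $c_{3}$ with the actual value of the ratio'' is again circular, since in the converse one does not yet know the ratio is constant --- if $C\neq 0$ the ratio $(s+c_{o})^{2}/\bigl((s+c_{o})^{2}/c_{3}+C\bigr)$ is genuinely non-constant. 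For what it is worth, the paper itself silently proves only the forward direction, so the airtight part of your proposal already covers everything the paper actually establishes; but as a proof of the stated ``if and only if,'' the converse would need the extra hypotheses above to be made explicit.
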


\begin{proof}
Let $\alpha :I\subset R\rightarrow G_{3}^{1}$ be a spacelike curve given
with the invariant parameter $s$. As we know from the above%
\begin{equation*}
m_{o}(s)=s+c_{o},
\end{equation*}

where $c_{o}$ is an arbitrary constant. Also, from (3.16), the curvature
functions $m_{i}(s),$ $0\leq i\leq 2$ satisfy%
\begin{equation}
m_{2}(s)m_{2}^{\prime }(s)-m_{1}(s)m_{1}^{\prime }(s)=\frac{s+c_{o}}{c_{3}}.
\end{equation}

By using Eqs.(3.2) with (3.18), we obtain%
\begin{equation*}
m_{1}=\frac{1}{c_{3}\kappa }.
\end{equation*}

It follows that%
\begin{equation*}
m_{2}=\frac{\kappa ^{\prime }-\kappa ^{3}c_{3}(s+c_{o})}{c_{3}\kappa
^{2}\tau }.
\end{equation*}

So, we get the result.
\end{proof}

\subsection{T-constant spacelike curves in $G_{3}^{1}$}

\begin{proposition}
There are no $T-constant$ spacelike curves in pseudo-Galilean space $%
G_{3}^{1}$.
\end{proposition}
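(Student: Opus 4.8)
The plan is to read the tangential length directly off the decomposition already established and to exploit the special form the pseudo-Galilean norm takes on non-isotropic vectors. First I would write the orthogonal decomposition (3.15) as $\alpha^{T} = m_{o}(s)\,T(s)$, so that $\alpha$ being $T$-constant means precisely that $\|\alpha^{T}\|$ is independent of $s$. By the first line of the system (3.2), namely $m_{o}^{\prime} = 1$, integration gives the linear coefficient $m_{o}(s) = s + c_{o}$ recorded in (3.3). Thus everything reduces to deciding whether $\|m_{o}(s)\,T(s)\|$ can be constant.

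The decisive step is the length computation, and it is here that the pseudo-Galilean setting does the work. Since $T(s)$ is the unit tangent of an admissible curve, it is non-isotropic; as recorded in Theorem 3.3 (and already visible from $T = \alpha^{\prime}$ for the parametrization $\alpha(s) = (s, y(s), z(s))$), it has the form $T(s) = (1, \ast, \ast)$ with first coordinate equal to $1$. Hence $\alpha^{T} = m_{o}T = (m_{o}, m_{o}\ast, m_{o}\ast)$ has first coordinate $m_{o}(s)$, which is nonzero away from the single point $s = -c_{o}$. By the length formula (2.1), the norm of a vector with nonzero first coordinate is the absolute value of that coordinate, so $\|\alpha^{T}\| = |m_{o}(s)| = |s + c_{o}|$. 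The point to get right is exactly this: no hyperbolic $y^{2}-z^{2}$ term ever enters the tangential length, because the tangential direction is governed entirely by the first (Galilean) coordinate.

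To conclude, I would observe that $|s + c_{o}|$ is a non-constant function of the arc-length on any interval $I$, being strictly monotone on each side of $s = -c_{o}$. Consequently $\|\alpha^{T}\|$ can be neither a nonzero constant nor identically zero on $\alpha(I)$, which rules out both the second-kind and first-kind cases of the $T$-constant definition. This contradicts the requirement that $\alpha$ be $T$-constant, and therefore no such spacelike curve exists in $G_{3}^{1}$.

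I do not anticipate a genuine obstacle. The whole argument hinges on the single observation that $\|\alpha^{T}\| = |m_{o}|$ via (2.1), combined with the linear growth $m_{o}(s) = s + c_{o}$ forced by $m_{o}^{\prime} = 1$. Once the length formula is applied correctly to a non-isotropic vector, the impossibility is immediate and uses neither the curvature nor the torsion hypotheses.
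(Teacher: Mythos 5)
Your proposal is correct and follows essentially the same route as the paper: both read off $m_{o}(s)=s+c_{o}$ from $m_{o}^{\prime}=1$ in (3.2) and conclude that $\left\Vert \alpha ^{T}\right\Vert =\left\vert m_{o}\right\vert$ cannot be constant (zero or nonzero), contradicting the $T$-constant condition. Your only addition is to justify explicitly, via the non-isotropic length formula (2.1) and the fact that $T=(1,\ast ,\ast )$, why $\left\Vert \alpha ^{T}\right\Vert =\left\vert s+c_{o}\right\vert$, a detail the paper's proof treats as immediate.
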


\begin{proof}
Let $\alpha :I\subset R\rightarrow G_{3}^{1}$ be a spacelike curve in $%
G_{3}^{1}$. Then $\left\Vert \alpha ^{T}\right\Vert =m_{o}$, where $m_{o}$\
is equal to zero or a nonzero constant. Since $m_{o}=x+c_{o}$, this
contradicts the fact of value of $m_{o}$.
\end{proof}

\subsection{N-constant spacelike curves in $G_{3}^{1}$}

\begin{lemma}
Let $\alpha :I\subset R\rightarrow G_{3}^{1}$ be a spacelike curve in $%
G_{3}^{1}$. Then $\alpha $\ is a $N-constant$ curve if and only if the
following condition
\begin{equation*}
m_{2}(s)m_{2}^{\prime }(s)-m_{1}(s)m_{1}^{\prime }(s)=0,
\end{equation*}%
is hold together Eqs.(3.2), where $m_{i}(s)$, $0\leq i\leq 2$ are
differentiable functions.
\end{lemma}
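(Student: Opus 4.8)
The plan is to prove the lemma by directly unwinding the definition of an $N$-constant curve and differentiating the defining relation. By Definition, $\alpha$ is $N$-constant precisely when $\left\Vert \alpha^{N}\right\Vert$ is constant, and from Eq.(3.17) this is equivalent to requiring that the quantity $\left\Vert \alpha^{N}(s)\right\Vert^{2}=m_{2}^{2}(s)-m_{1}^{2}(s)=c_{4}$ is a real constant $c_{4}$. The natural first step is therefore to translate the constancy of $\left\Vert \alpha^{N}\right\Vert$ into a differential statement by differentiating this identity with respect to the arc-length parameter $s$.

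Differentiating $m_{2}^{2}(s)-m_{1}^{2}(s)=c_{4}$ gives
\begin{equation*}
2m_{2}(s)m_{2}^{\prime}(s)-2m_{1}(s)m_{1}^{\prime}(s)=0,
\end{equation*}
which after dividing by $2$ yields exactly the asserted condition $m_{2}(s)m_{2}^{\prime}(s)-m_{1}(s)m_{1}^{\prime}(s)=0$. For the forward direction I would simply record this computation. For the converse, I would integrate: if $m_{2}m_{2}^{\prime}-m_{1}m_{1}^{\prime}=0$ holds identically, then $\frac{d}{ds}\left(m_{2}^{2}-m_{1}^{2}\right)=0$, so $m_{2}^{2}-m_{1}^{2}$ equals a constant $c_{4}$, i.e. $\left\Vert \alpha^{N}\right\Vert^{2}=c_{4}$ is constant, which is precisely the definition of $\alpha$ being $N$-constant. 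Throughout, the functions $m_{i}(s)$ are the coordinate functions of the position vector decomposition in Eq.(1.2), and they are assumed to satisfy the Frenet-derived system Eqs.(3.2); the phrase ``together with Eqs.(3.2)'' in the statement signals that $m_{i}$ are not arbitrary but are constrained by those relations, so I would remark that the equivalence is understood within that constrained system.

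The argument is essentially a one-line differentiation together with its trivial converse, so I do not anticipate any genuine obstacle. The only point deserving care is the definition of the normal length in the pseudo-Galilean setting: because $N$ is timelike for a spacelike curve, the norm involves $\left\vert m_{2}^{2}-m_{1}^{2}\right\vert$, and one must confirm that differentiating the squared norm is legitimate wherever $m_{2}^{2}-m_{1}^{2}$ does not change sign, so that the absolute value can be dropped. I would note that since $\left\Vert \alpha^{N}\right\Vert$ is assumed constant (hence of fixed sign inside the absolute value on each component of $\alpha(I)$), this causes no difficulty, and Eq.(3.17) already encodes the correctly signed expression $m_{2}^{2}-m_{1}^{2}$. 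This completes the proof plan.
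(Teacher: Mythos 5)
Your proof is correct: the paper itself states this lemma without any proof, treating it as an immediate consequence of Definition 3.3 and Eq.~(3.17), and your differentiation (forward direction) and integration (converse) argument is precisely the computation the paper implicitly relies on. Your closing remark about the absolute value in the pseudo-Galilean norm of the timelike/spacelike normal components is a sensible point of care that the paper glosses over by writing $\left\Vert \alpha^{N}\right\Vert^{2}=m_{2}^{2}-m_{1}^{2}$ without the modulus.
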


\begin{proposition}
Let $\alpha :I\subset R\rightarrow G_{3}^{1}$ be a spacelike curve in $%
G_{3}^{1}$. Then $\alpha $\ is a $N-constant$ curve of first kind if $\alpha
$\ is a straight line in $G_{3}^{1}$.
\end{proposition}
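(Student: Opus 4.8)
The plan is to show that for a straight line the normal component of the position vector vanishes identically, which is exactly the first-kind condition. A straight line in $G_{3}^{1}$ is an affine curve, $\alpha(s)=p+sT$, where $p$ is a fixed point and the unit tangent $T=\alpha'$ is a constant vector (equivalently $\alpha''\equiv 0$). Since the position vector in the decomposition $\alpha=m_{o}T+m_{1}N+m_{2}B$ is measured from a freely chosen origin, I would place the origin at a point of the line. After this choice $\alpha(s)=(s+c_{o})\,T$, so the position vector is everywhere parallel to the tangent direction.

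With $\alpha$ purely tangential, the orthogonal splitting $\alpha=\alpha^{T}+\alpha^{N}$ gives $\alpha^{T}=\alpha$ and $\alpha^{N}=0$; equivalently, comparing $\alpha=(s+c_{o})T$ with $\alpha=m_{o}T+m_{1}N+m_{2}B$ and using that $\{T,N,B\}$ is a basis forces $m_{1}(s)=m_{2}(s)=0$. Substituting into (3.17) yields $\|\alpha^{N}\|^{2}=m_{2}^{2}-m_{1}^{2}=0$, so $\|\alpha^{N}\|\equiv 0$. The condition of the preceding Lemma, $m_{2}m_{2}'-m_{1}m_{1}'=0$, is then satisfied trivially together with the system (3.2), so $\alpha$ is $N$-constant, and the vanishing of $\|\alpha^{N}\|$ places it in the first kind by Definition 3.3, which completes the proof.

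The main obstacle is that the Frenet apparatus degenerates on a line: the principal normal $N=\alpha''/\kappa$ of (2.6) is the indeterminate $0/0$ there, so one cannot argue through the pointwise Frenet frame. I would sidestep this by working directly with the invariant orthogonal decomposition $\alpha=\alpha^{T}+\alpha^{N}$ and its length $\|\alpha^{N}\|$, which are well defined regardless of whether $N$ exists pointwise. A secondary point to make explicit is the role of the origin: only with the origin taken on the line does the normal component vanish, so the first-kind conclusion is tied to that natural choice, whereas a general origin would leave $\|\alpha^{N}\|$ equal to a nonzero constant, i.e. merely $N$-constant of the second kind.
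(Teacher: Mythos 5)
Your proposal proves the implication ``straight line $\Rightarrow$ $N$-constant of first kind,'' which is the literal reading of the proposition's wording (``$A$ if $B$''). The paper's own proof, however, runs in the opposite direction: it assumes $\alpha$ is $N$-constant of first kind, so $m_{2}^{2}-m_{1}^{2}=0$, splits into the cases $m_{2}=m_{1}$ and $m_{2}=-m_{1}$, and feeds each into the Frenet system (3.2); in either case the second and third equations of (3.2) combine to give $\kappa\,m_{o}=\kappa(s+c_{o})=0$, hence $\kappa\equiv 0$, i.e.\ $\alpha$ is a straight line. So the content the authors actually establish (and the only direction with real mathematical substance here) is the converse of what you proved --- the ``if'' in the statement is evidently a misstatement of ``only if.'' Your argument and the paper's argument are therefore not two routes to the same result; they prove different implications, and yours cannot substitute for the paper's.

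There are also two problems with your direction inside this paper's framework, one of which you partly flag yourself. First, a straight line has $\kappa\equiv 0$, so it violates the admissibility condition (2.3), and $N$ does not exist; since ``spacelike'' is defined by the causal character of $N$, the hypothesis ``spacelike curve which is a straight line'' is empty in the paper's setting. Working with the bare decomposition $\alpha=\alpha^{T}+\alpha^{N}$ avoids the $0/0$ issue, but it takes you outside the class of curves to which Definitions 3.2--3.3 and the system (3.2) apply; this is exactly why the meaningful statement flows the other way (first kind forces $\kappa=0$, i.e.\ such curves degenerate out of the class). Second, your conclusion requires re-choosing the origin to lie on the line. The functions $m_{i}$, the system (3.2), and the Chen-style notions of $T$-constant, $N$-constant and constant-ratio are all defined relative to a fixed origin; with the origin fixed, a generic line is $N$-constant of the \emph{second} kind, as you note. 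So even granting the literal wording, what you have proved is ``a line through the origin is $N$-constant of the first kind,'' which is essentially tautological and not the proposition the paper proves or uses.
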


\begin{proof}
Suppose that $\alpha $\ is a $N-constant$ curve of first kind in $G_{3}^{1}$%
, then
\begin{equation*}
m_{2}^{2}(s)-m_{1}^{2}(s)=0.
\end{equation*}

So, we have two cases to be discussed:

\textbf{Case1.}%
\begin{equation*}
m_{2}(s)=m_{1}(s).
\end{equation*}

Using Eqs.(3.2), we get%
\begin{equation*}
\kappa =0.
\end{equation*}

\textbf{Case2.}%
\begin{equation*}
m_{2}(s)=-m_{1}(s).
\end{equation*}

Also, from Eqs.(3.2), we obtain%
\begin{equation*}
\kappa =0.
\end{equation*}

It means that the curve $\alpha $ is a straight line in $G_{3}^{1}$.
\end{proof}

\begin{theorem}
Let $\alpha :I\subset R\rightarrow G_{3}^{1}$ be a spacelike curve in $%
G_{3}^{1}$. If $\alpha $\ is a $N-constant$ curve of second kind, then the
position vector $\alpha $ has the parametrization of the form%
\begin{eqnarray}
\alpha (s) &=&(s+c_{o})T(s)+\left[ \frac{1}{4}e^{-u(s)}\left(
-4c_{4}+e^{2u(s)}\right) -\frac{1}{2}e^{u(s)}\right] N(s)  \notag \\
&&+\left[ \frac{1}{4}e^{-u(s)}\left( -4c_{4}+e^{2u(s)}\right) \right] B(s),
\end{eqnarray}

where $u(s)=\int \tau (s)ds+c_{5}$, $c_{5}$\ is integral constant.
\end{theorem}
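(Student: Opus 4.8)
The plan is to convert the defining property of an $N$-constant curve of second kind into a single ordinary differential equation for the coefficient $m_2(s)$ and then integrate it in closed form. By the definition of second kind together with equation (3.17), this means $\left\Vert \alpha^N\right\Vert^2 = m_2^2 - m_1^2 = c_4$ for a nonzero constant $c_4$. Since the first equation of (3.2) already yields $m_o = s + c_o$, it suffices to recover $m_1$ and $m_2$; the key point is that this one scalar constraint, combined with the Frenet-derived system (3.2), determines them up to integration constants.

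First I would use the third equation of (3.2), $m_2' + \tau m_1 = 0$, to eliminate $m_1$ via $m_1 = -m_2'/\tau$, and substitute into $m_2^2 - m_1^2 = c_4$ to obtain the first-order relation $m_2^2 - (m_2'/\tau)^2 = c_4$. As in the proof of Theorem~3.1 I would then pass to the variable $u(s) = \int \tau(s)\,ds + c_5$, for which $d/ds = \tau\, d/du$; writing a dot for $d/du$, this gives $m_1 = -\dot m_2$ and reduces the constraint to the autonomous equation $m_2^2 - \dot m_2^2 = c_4$.

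Next I would solve this equation. Differentiating $m_2^2 - \dot m_2^2 = c_4$ in $u$ and cancelling the factor $\dot m_2$ linearises it to $\ddot m_2 = m_2$, whose general solution is $m_2(u) = A e^{u} + B e^{-u}$. Reinserting this into the first-order constraint pins down the product of the coefficients, because $m_2^2 - \dot m_2^2 = 4AB$; the leftover scaling freedom between $A$ and $B$ can be absorbed into the additive constant $c_5$ hidden inside $u$, which is exactly why the final answer carries no parameter beyond $c_o$, $c_4$ and $c_5$. Normalising $A = \tfrac14$ and fixing $B = -c_4$ from the constraint gives $m_2 = \tfrac14 e^{-u}(-4c_4 + e^{2u})$, and then $m_1 = -\dot m_2 = m_2 - \tfrac12 e^{u}$. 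Substituting $m_o = s + c_o$ together with these expressions for $m_1$ and $m_2$ into the decomposition $\alpha = m_o T + m_1 N + m_2 B$ produces the stated parametrisation.

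The step needing the most care is the bookkeeping of signs and constants. Because the pseudo-Galilean product makes $m_2^2 - m_1^2$ an indefinite quantity, the sign carried by $c_4$ through $4AB = \pm c_4$ must be tracked consistently so that $B$ comes out as $-c_4$, and the rescaling of $A,B$ against a shift of $c_5$ must be invoked cleanly so that the three surviving constants are genuinely independent. A secondary consistency point concerns the remaining second equation of (3.2): since $m_1 = -\dot m_2$ together with $\ddot m_2 = m_2$ already forces $m_1' + \tau m_2 = 0$, that equation collapses to $\kappa\, m_o = 0$, so the construction is compatible with the full Frenet system only when $\kappa \equiv 0$. I would therefore either restrict to that case or make explicit that only the first and third equations of (3.2) are used in producing the stated form.
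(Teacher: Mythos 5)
Your proposal is correct and follows essentially the same route as the paper's own proof: take $m_o=s+c_o$, eliminate $m_1=-m_2'/\tau$ via the third equation of (3.2), turn the constancy condition $m_2^2-m_1^2=c_4$ into a first-order ODE for $m_2$, solve it (the paper simply quotes the solution $m_2=\tfrac14 e^{-u}(-4c_4+e^{2u})$ rather than linearizing to $\ddot m_2=m_2$ as you do, which is a cleaner justification), and back-substitute to get $m_1$. Your closing observation is the one genuinely new point: the paper's proof likewise uses only the first and third equations of (3.2), and since the resulting coefficients satisfy $m_1'+\tau m_2=0$, the unused second equation collapses to $\kappa(s+c_o)=0$, so the stated parametrization is compatible with the full Frenet system only when $\kappa\equiv 0$ --- a restriction the paper never acknowledges; you were right to flag this rather than pass over it.
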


\begin{proof}
From (3.3), we have%
\begin{equation*}
m_{o}(s)=(s+c_{o}).
\end{equation*}%
Besides, the third equation of (3.2) together (3.17), yield%
\begin{equation*}
m_{2}^{\prime 2}(s)-\tau ^{2}(s)m_{2}^{2}(s)-c_{4}\tau ^{2}(s)=0,
\end{equation*}

where $c_{4}\neq 0$\ is a real constant. The solution of this equation is%
\begin{equation}
m_{2}(s)=\frac{1}{4}e^{-u(s)}\left( -4c_{4}+e^{2u(s)}\right) .
\end{equation}

If we substitute Eq.(3.20) in the third equation of (3.2), it can be obtained%
\begin{equation}
m_{1}(s)=\frac{1}{4}e^{-u(s)}\left( -4c_{4}+e^{2u(s)}\right) -\frac{1}{2}%
e^{u(s)},
\end{equation}

Eqs.(3.3), (3.20) and (3.21) give the required result of the theorem.
\end{proof}

\begin{theorem}
Let $\alpha $ be a spacelike curve in the Pseudo-Galilean space $G_{3}^{1}$
with its pseudo-Galilean trihedron $\{T(s),N(s),B(s)\}.$If the\ curve $%
\alpha $ lies on a pseudo-Galilean sphere $S_{\pm }^{2}$, then it is a $%
N-constant$ curve of second kind and the center of a pseudo-Galilean sphere
of $\alpha $ at the point $c(s)$ is given by%
\begin{equation*}
c(s)=\alpha (s)+m_{1}(s)N(s)+m_{2}(s)B(s).
\end{equation*}
\end{theorem}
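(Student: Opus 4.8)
The plan is to characterize the condition that $\alpha$ lies on a pseudo-Galilean sphere and extract both conclusions—that $\alpha$ is $N$-constant of second kind and the explicit form of the center—from a single differentiation argument. First I would write the sphere condition as $g(\alpha - c, \alpha - c) = \pm r^{2}$ for a fixed center $c \in G_{3}^{1}$, where $r$ is the (constant) radius. Setting $\beta(s) = \alpha(s) - c$, the hypothesis says $g(\beta, \beta)$ is constant along the curve; differentiating with respect to $s$ and using the Frenet equations (2.8) should force $\beta$ to have no tangential freedom in a way that pins down its $N$ and $B$ components.

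Next I would decompose $\beta = \alpha - c$ in the Frenet frame as $\beta = n_{o}T + n_{1}N + n_{2}B$, relate the $n_{i}$ to the position-vector coefficients $m_{i}$ of (1.2) by a constant shift coming from $c$, and differentiate the spacelike-sphere relation $n_{2}^{2} - n_{1}^{2} = \mathrm{const}$ (recalling that the length of an isotropic vector is governed by $y^{2} - z^{2}$, so that $g(\beta, \beta)$ restricted to the $N,B$-plane is $n_{2}^{2} - n_{1}^{2}$). Because the tangential direction in $G_{3}^{1}$ is non-isotropic and the sphere is defined by the isotropic part of the metric, the tangential component $n_{o}$ must not contribute, and a short computation via (3.2) should give $n_{2}n_{2}^{\prime} - n_{1}n_{1}^{\prime} = 0$. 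By the lemma on $N$-constant curves this is exactly the statement that $\|\alpha^{N}\|$ is constant, and since the radius is nonzero this forces the second kind, establishing the first claim.

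For the formula for the center, I would solve $c = \alpha - \beta$ componentwise. Once the differentiation above shows that the tangential piece of $\beta$ vanishes and that $\beta$ lives in the $N,B$-plane with the appropriate sign structure, one reads off $c(s) = \alpha(s) + m_{1}(s)N(s) + m_{2}(s)B(s)$ directly, where the $m_{i}$ are precisely the normal-plane coefficients computed in the $N$-constant of second kind setting (Theorem on the parametrization, Eqs. (3.20) and (3.21)). Consistency is checked by differentiating this expression for $c(s)$ and verifying, using (2.8) and (3.2), that $c^{\prime}(s) = 0$, so that $c$ is indeed a fixed point.

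The main obstacle I anticipate is handling the pseudo-Galilean metric correctly in the differentiation step: because $g$ is defined piecewise according to whether the first coordinate vanishes, one must be careful that the sphere relation only constrains the isotropic $N,B$-plane and that the tangential (non-isotropic) direction is treated separately. Getting the signs of $\varepsilon$ and the timelike/spacelike character of $N$ and $B$ right in the identity $g(\beta,\beta) = n_{2}^{2} - n_{1}^{2}$ is where the calculation is most delicate; the rest follows mechanically from the Frenet formulas and the already-established $N$-constant machinery.
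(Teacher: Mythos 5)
Your overall skeleton (write the sphere relation $g(\alpha-c,\alpha-c)=\pm r^{2}$, differentiate it with the Frenet formulas, decompose the displacement in the frame, kill the tangential component, read off the center) is the same as the paper's, but the two steps that carry the actual content have genuine gaps. The first is your mechanism for eliminating the tangential component $n_{o}$: you argue that the sphere ``is defined by the isotropic part of the metric,'' so $n_{o}$ ``must not contribute.'' That is not a valid argument in $G_{3}^{1}$ --- if $n_{o}\neq 0$ the piecewise product gives $g(\beta,\beta)=n_{o}^{2}$, and the sphere condition then reads $n_{o}^{2}=\pm r^{2}$, which is perfectly compatible with a nonzero constant $n_{o}$. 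What actually forces $n_{o}=0$ is the first differentiation of the sphere relation: it yields $g(T,c-\alpha)=0$, and since $T$ is non-isotropic this product equals the first component of $c-\alpha$, i.e.\ $n_{o}$ itself (equivalently, $n_{o}=\pm r$ constant would contradict $n_{o}^{\prime}=1$ from the first equation of (3.2)). The paper does exactly this, and then keeps differentiating: the second derivative gives $-\kappa\, g(N,c-\alpha)+1=0$, hence $m_{1}=-1/\kappa$, and a third differentiation gives $m_{2}=-m_{1}^{\prime}/\tau$; these identifications are what produce both the center formula and the constancy $m_{2}^{2}-m_{1}^{2}=\pm r^{2}$. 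Your plan never generates them. Pointing instead to Eqs.\ (3.20)--(3.21) is a conflation: those are coefficients of the position vector of a generic $N$-constant curve of second kind, not of the displacement $c-\alpha$, so the center formula is left unproved in your outline.

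Two of your auxiliary claims are also false as stated. The coefficients $n_{i}$ of $\alpha-c$ are not related to the $m_{i}$ of (1.2) ``by a constant shift'': the fixed vector $c$ has $s$-dependent components in the moving frame $\{T(s),N(s),B(s)\}$, so the shift is not constant (the paper sidesteps this by, in effect, taking the origin at the center). And your proposed consistency check $c^{\prime}(s)=0$ cannot succeed: with $c=\alpha+m_{1}N+m_{2}B$ one computes $c^{\prime}=T+(m_{1}^{\prime}+\tau m_{2})N+(m_{2}^{\prime}+\tau m_{1})B$; the $N$- and $B$-components vanish by (3.2) with $m_{o}=0$, but the tangential component is identically $1$, never $0$. (This in fact exposes a defect of the theorem itself in the degenerate pseudo-Galilean setting --- the displacement of an admissible curve from a fixed point cannot remain in the isotropic $N,B$-plane, since that plane has constant first coordinate while $x(s)=s$ --- but as a piece of a proof strategy, a verification that necessarily fails is a gap, and the paper's own proof avoids it by never performing that check.)
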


\begin{proof}
Let $S_{\pm }^{2}$ be a sphere in $G_{3}^{1}$, then $S_{\pm }^{2}$ is given
by%
\begin{equation*}
S_{\pm }^{^{2}}=\{u\in G_{3}^{1}:g(u,u)=\pm r^{2}\},
\end{equation*}

where $r$ is the radius of the pseudo-Galilean sphere and it is constant.

Let $c$ be the center of the pseudo-Galilean sphere, then we have%
\begin{equation*}
g(c(s)-\alpha (s),c(s)-\alpha (s))=\pm r^{2}.
\end{equation*}

Differentiating this equation with respect to $s$, we get%
\begin{equation}
g(-T(s),c(s)-\alpha (s))=0.
\end{equation}

If we repeat the derivation, we have%
\begin{equation*}
g(-T^{\prime }(s),c(s)-\alpha (s))+g(-T(s),-T(s))=0.
\end{equation*}

From (2.8), we get%
\begin{equation}
-\kappa (s)g(N(s),c(s)-\alpha (s))+1=0,
\end{equation}

and since we know that $c(s)-\alpha (s)\in Sp\{T(s),N(s),B(s)\}$, so we can
write%
\begin{equation}
c(s)-\alpha (s)=m_{o}(s)T(s)+m_{1}(s)N(s)+m_{2}(s)B(s).
\end{equation}

Now, from (3.23) and (3.24), we have%
\begin{equation*}
\kappa (s)m_{1}(s)+1=0.
\end{equation*}

From which%
\begin{equation*}
m_{1}(s)=-\frac{1}{\kappa (s)}.
\end{equation*}

Also, from (3.22) and (3.24), one can write%
\begin{equation*}
g(T(s),c(s)-\alpha (s))=m_{o}(s),
\end{equation*}

which gives%
\begin{equation*}
m_{o}(s)=0,
\end{equation*}%
and then Eq.(3.24) becomes%
\begin{equation*}
c(s)-\alpha (s)=m_{1}(s)N(s)+m_{2}(s)B(s).
\end{equation*}

Besides, the derivation of Eq.(3.23) leads to%
\begin{equation*}
m_{2}(s)=\frac{-m_{1}^{\prime }(s)}{\tau (s)}.
\end{equation*}

From the calculations, we can obtain%
\begin{equation*}
m_{2}^{2}(s)-m_{1}^{2}(s)=\pm r^{2}=const.
\end{equation*}

which completes the proof.
\end{proof}

\begin{theorem}
Let $\alpha $ be a $N-constant$ curve of second kind which lies on a
pseudo-Galilean sphere $S_{\pm }^{2}$ with constant radius $r$in $G_{3}^{1}$%
. Then we have the following equation%
\begin{equation*}
m_{2}^{\prime }(s)-\tau (s)m_{1}(s)=0,
\end{equation*}

where $m_{2}(s)\neq 0,$ $\tau (s)\neq 0$.
\end{theorem}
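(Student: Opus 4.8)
The plan is to combine the two hypotheses---that $\alpha$ is $N$-constant of second kind and that it lies on $S_{\pm}^{2}$---through the single scalar function $m_{2}^{2}-m_{1}^{2}$, and then to reduce the sphere data to a first-order identity by differentiating. First I would recall from the preceding (sphere characterization) theorem that for a curve lying on $S_{\pm}^{2}$ the radius vector admits the decomposition $c(s)-\alpha(s)=m_{1}(s)N(s)+m_{2}(s)B(s)$, the tangential coefficient having been forced to vanish by $g(T,c-\alpha)=m_{o}=0$, and that the sphere condition reads $g(c-\alpha,c-\alpha)=m_{2}^{2}-m_{1}^{2}=\pm r^{2}$. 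The $N$-constant second-kind hypothesis, through (3.17), independently forces $\Vert\alpha^{N}\Vert^{2}=m_{2}^{2}-m_{1}^{2}=c_{4}$ to be a nonzero constant, so under either reading the quantity $m_{2}^{2}-m_{1}^{2}$ is constant along $\alpha$.

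Next I would differentiate this constant with respect to the arc length $s$. Because both $N$ and $B$ are isotropic (first coordinate $0$), the relevant branch of the pseudo-Galilean product is $g(u,u)=u_{2}^{2}-u_{3}^{2}$, on which $g(N,N)=-1$, $g(B,B)=+1$, $g(N,B)=0$, and the ordinary Leibniz rule applies. Differentiating $m_{2}^{2}-m_{1}^{2}=\text{const}$ then yields
\[
m_{2}(s)\,m_{2}^{\prime}(s)-m_{1}(s)\,m_{1}^{\prime}(s)=0.
\]

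Finally I would eliminate one of the derivatives using the first-order relation between the coefficients coming from the Frenet system (2.8): the third equation of (3.2), together with the relation derived inside the sphere theorem, ties $m_{1}^{\prime}$ to $\tau m_{2}$ (equivalently $m_{2}^{\prime}$ to $\tau m_{1}$). Substituting this relation into the displayed equation and factoring out the common factor $m_{2}$, which is nonzero by hypothesis (and using $\tau\neq 0$), collapses the second-order information into the claimed first-order identity $m_{2}^{\prime}(s)-\tau(s)m_{1}(s)=0$.

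The step I expect to be the main obstacle is the differentiation of $g(c-\alpha,c-\alpha)$ itself, precisely because the pseudo-Galilean metric is degenerate and branch-dependent: here the \emph{center} $c(s)$ is not a fixed point of $G_{3}^{1}$---its first coordinate necessarily equals that of $\alpha$, so $c^{\prime}\neq 0$---and one must first verify that $c-\alpha$ stays on the isotropic branch $x=0$ before applying Leibniz. Keeping the signs $g(N,N)=-1$ and $g(B,B)=+1$ straight on that branch is what fixes the sign in the final relation; once that bookkeeping is settled, the conclusion follows at once by the factorization above.
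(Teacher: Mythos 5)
Your skeleton is exactly the paper's proof: start from the constancy of $m_{2}^{2}-m_{1}^{2}$ (equal to $\pm r^{2}$ by the sphere condition, to $c_{4}$ by the $N$-constant hypothesis), differentiate to get $m_{2}m_{2}^{\prime}-m_{1}m_{1}^{\prime}=0$, substitute a first-order relation between $m_{1}^{\prime}$ and $\tau m_{2}$, and cancel the factor $m_{2}\neq 0$. So the approach is the same, and your remarks about the degenerate metric and the fact that the ``center'' $c(s)$ cannot be a fixed point (its first coordinate must track that of $\alpha$) are sound --- indeed more careful than anything in the paper.

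However, the sign question you postpone to the end is not mere bookkeeping; it is where the argument stands or falls, and neither of the two sources you cite supplies the sign the theorem needs. The third equation of (3.2) reads $m_{2}^{\prime}+\tau m_{1}=0$, and the relation derived inside the paper's sphere theorem is $m_{2}=-m_{1}^{\prime}/\tau$, i.e. $m_{1}^{\prime}=-\tau m_{2}$. Feeding either one into $m_{2}m_{2}^{\prime}-m_{1}m_{1}^{\prime}=0$ yields $m_{2}\left(m_{2}^{\prime}+\tau m_{1}\right)=0$, hence $m_{2}^{\prime}+\tau m_{1}=0$ --- the opposite sign from the stated conclusion (in the first case this is immediate, since it is the cited equation itself). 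The paper reaches $m_{2}^{\prime}-\tau m_{1}=0$ only by substituting $m_{2}=+m_{1}^{\prime}/\tau$, silently flipping the sign of the relation it had just derived in the sphere theorem. So your proof, completed with the relations you actually invoke, establishes $m_{2}^{\prime}+\tau m_{1}=0$ rather than the claimed identity; the discrepancy is an internal inconsistency of the paper, but as a blind proof of the stated theorem your argument cannot close without assuming $m_{1}^{\prime}=+\tau m_{2}$, which contradicts both (3.2) and the sphere theorem you rely on.
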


\begin{proof}
Let $\alpha $ be a $N-constant$ curve in $G_{3}^{1}$, so we have%
\begin{equation*}
m_{2}^{2}(s)-m_{1}^{2}(s)=\pm r^{2}.
\end{equation*}

Since $r$ is const., then the differentiation of the last equation gives%
\begin{equation*}
m_{2}(s)m_{2}^{\prime }(s)-m_{1}(s)m_{1}^{\prime }(s)=0.
\end{equation*}

Substituting by $m_{2}(s)=\frac{m_{1}^{\prime }(s)}{\tau (s)}$ in this
equation, we get%
\begin{equation*}
m_{2}^{\prime }(s)-\tau (s)m_{1}(s)=0.
\end{equation*}
So the proof is completed.
\end{proof}

\begin{theorem}
Let $\alpha (s)$ be a spacelike curve in the Pseudo-Galilean space $%
G_{3}^{1} $ with $\kappa (s)\neq 0$, $\tau (s)\neq 0$. The image of the $%
N-constant$\ curve $\alpha $ lies on a pseudo-Galilean sphere $S_{\pm }^{2}$
if and only if for each $s\in I\subset R$, its curvatures satisfy the
following equalities:%
\begin{eqnarray}
s+c_{o} &=&0,  \notag \\
\frac{1}{4}e^{-u(s)}\left( -4c_{4}+e^{2u(s)}\right) -\frac{1}{2}e^{u(s)} &=&%
\frac{1}{\kappa (s)},  \notag \\
\frac{1}{4}e^{-u(s)}\left( -4c_{4}+e^{2u(s)}\right) &=&\frac{\kappa ^{\prime
}(s)}{\kappa ^{2}(s)\tau (s)},
\end{eqnarray}

where $u(s)=\int \tau (s)ds+c_{5}$\ and $c_{o},c_{4}$ and $c_{5}\in R$.
\end{theorem}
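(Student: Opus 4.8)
The plan is to argue both implications directly in terms of the position-vector components, writing $\alpha = m_o T + m_1 N + m_2 B$ and using that the first equation of (3.2) forces $m_o(s) = s + c_o$ for \emph{every} admissible curve, with no sphere hypothesis. I would choose the fixed origin to be the center of the pseudo-Galilean sphere, so that the statement ``$\alpha$ lies on $S_\pm^2$'' becomes the single scalar condition $g(\alpha,\alpha) = \pm r^2 = \mathrm{const}$.

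First I would evaluate $g(\alpha,\alpha)$ from the pseudo-Galilean metric and the orthonormality of $\{T,N,B\}$. Since $T = (1,\ast,\ast)$ is non-isotropic while $N$ and $B$ are isotropic, the first coordinate of $\alpha$ is exactly $m_o$, so the metric gives $g(\alpha,\alpha) = m_o^2$ whenever $m_o \neq 0$ and $g(\alpha,\alpha) = m_2^2 - m_1^2$ once $m_o = 0$. Because $m_o = s + c_o$ is non-constant, constancy of $g(\alpha,\alpha)$ can hold only if $m_o \equiv 0$, which is the first equality $s + c_o = 0$; at the same time this yields $g(\alpha,\alpha) = m_2^2 - m_1^2 = \pm r^2$, so $\alpha$ is $N$-constant, and of second kind since $r \neq 0$ forces $c_4 \neq 0$. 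Having identified $\alpha$ as an $N$-constant curve of second kind I would then read off $m_1, m_2$ in two independent ways: Theorem 3.5 supplies the exponential expressions $m_2 = \tfrac14 e^{-u}(-4c_4 + e^{2u})$ and $m_1 = m_2 - \tfrac12 e^{u}$, while the sphere computation of Theorem 3.6 gives $m_1 = -1/\kappa$ and $m_2 = -m_1'/\tau$ for the vector $c - \alpha$; with the origin taken at the center ($c = 0$) every sign flips, producing $m_1 = 1/\kappa$ and $m_2 = \kappa'/(\kappa^2\tau)$ for $\alpha$ itself. Equating the two expressions for $m_1$ and for $m_2$ delivers exactly the second and third equalities of the theorem.

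For the converse I would assume the three equalities. The first gives $m_o = 0$. The left-hand sides of the second and third are precisely the second-kind components of Theorem 3.5, and a one-line computation with $m_2 = -c_4 e^{-u} + \tfrac14 e^{u}$ and $m_1 = -c_4 e^{-u} - \tfrac14 e^{u}$ gives $m_2^2 - m_1^2 = (m_2 - m_1)(m_2 + m_1) = -c_4$, a constant; combined with $m_o = 0$ this makes $g(\alpha,\alpha) = m_2^2 - m_1^2 = -c_4$ constant, so $\alpha$ is $N$-constant of second kind and lies on the sphere of radius $\sqrt{|c_4|}$. The step I expect to be most delicate is the bookkeeping between the two meanings of the symbols $m_i$ --- the components of $\alpha$ in Theorem 3.5 versus the components of $c - \alpha$ in Theorem 3.6 --- and the resulting sign reconciliation between $m_1 = -1/\kappa$ and $m_1 = 1/\kappa$, which the choice of origin at the center settles cleanly. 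A secondary point worth flagging explicitly is the status of the first equality: since $m_o = s + c_o$ never vanishes identically, the genuine content of ``$s + c_o = 0$'' is the constraint $m_o \equiv 0$ imposed by constancy of $g(\alpha,\alpha)$, rather than an ordinary pointwise identity.
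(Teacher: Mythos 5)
Your proposal is correct and follows essentially the same route as the paper: both directions reduce to equating the Frenet components of $\alpha$ forced by the sphere condition ($m_{o}=0$, $m_{1}=1/\kappa$, $m_{2}=\kappa^{\prime }/(\kappa ^{2}\tau )$ --- which the paper re-derives by successively differentiating $g(\alpha ,\alpha )=r^{2}$, Eqs.~(3.26)--(3.28), and you instead import from Theorem~3.6 with the signs flipped by placing the origin at the center) with the $N$-constant second-kind parametrization of Theorem~3.5. Your two refinements --- obtaining $m_{o}\equiv 0$ directly from constancy of $g(\alpha ,\alpha )=m_{o}^{2}$, and explicitly checking $m_{2}^{2}-m_{1}^{2}=-c_{4}$ in the converse where the paper merely asserts the sphere equation holds --- are gains in rigor within the same argument, not a different approach.
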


\begin{proof}
By assumption, we have%
\begin{equation*}
g(\alpha (s),\alpha (s))=r^{2},
\end{equation*}

for every $s\in I\subset R$\ and $r$ is radius of the pseudo-Galilean
sphere. Differentiation this equation with respect to $s$\ gives%
\begin{equation}
g(T(s),\alpha (s))=0.
\end{equation}

By a new differentiation, we find that%
\begin{equation}
g(N(s),\alpha (s))=-\frac{1}{\kappa (s)}.
\end{equation}

One more differentiation in $s$ gives%
\begin{equation}
g(B(s),\alpha (s))=\frac{\kappa ^{\prime }(s)}{\kappa ^{2}(s)\tau (s)}.
\end{equation}

Using Eqs.(3.26)-(3.28) in (3.19), we obtain (3.25).

Conversely, we assume that Eq.(3.25) holds for each $s\in I\subset R$, then
from (3.19) the position vector of $\alpha $ can be expressed as%
\begin{equation*}
\alpha (s)=-\frac{1}{\kappa (s)}N(s)+\frac{\kappa ^{\prime }(s)}{\kappa
^{2}(s)\tau (s)}B(s),
\end{equation*}

which satisfies the equation $g(\alpha (s),\alpha (s))=r^{2}$. It means that
the curve $\alpha $\ lies on a pseudo-Galilean sphere $S_{\pm }^{2}$.
\end{proof}

\begin{theorem}
Let $\alpha $ be a spacelike curve in pseudo-Galilean space $G_{3}^{1}$. If $%
\alpha $ is a circle then $\alpha $\ is $N-constant$ curve of second kind.
\end{theorem}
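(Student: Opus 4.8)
The plan is to exhibit the circle as a curve lying on a pseudo-Galilean sphere and then appeal to Theorem 3.6, which guarantees that any spacelike curve on such a sphere is $N$-constant of second kind. The single property of a circle I would use is that its pseudo-Galilean curvature is constant, $\kappa = \mathrm{const} \neq 0$, so that $\kappa' = 0$.

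Concretely, I would verify the sphere-membership conditions of Theorem 3.8. Because $\kappa' = 0$, the binormal component demanded by the third equation of (3.25) is $m_2 = \kappa'/(\kappa^2\tau) = 0$, while the second equation fixes the constant normal component $m_1 = 1/\kappa$. Since the Frenet trihedron is orthonormal in the pseudo-Galilean sense, with $g(N,N) = -1$ and $g(B,B) = +1$, the normal part of the position vector has squared length $\|\alpha^{N}\|^2 = m_2^2 - m_1^2 = -1/\kappa^2$, a nonzero constant; this is precisely the value taken by $g(\alpha,\alpha)$ along the curve, so the image of $\alpha$ lies on the sphere $S_{-}^{2}$ of radius $r = 1/|\kappa|$. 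Theorem 3.6 then yields that $\alpha$ is $N$-constant, and since $\|\alpha^{N}\|^2 = -1/\kappa^2 \neq 0$ it is of second kind rather than first, which is the assertion.

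The step I expect to be the crux is the sphere reduction inside the degenerate pseudo-Galilean metric. The subtle point is reconciling the tangential component $m_0 = s + c_0$ with the relation $g(T,\alpha) = m_0 = 0$ that the first line of (3.25) requires; this forces one to read the position vector relative to the center of the sphere, so that its tangential part drops out and $g(\alpha,\alpha)$ is computed from the isotropic components $m_2^2 - m_1^2$ alone. A cleaner self-contained route would avoid Theorem 3.8 altogether: integrate the system (3.2) under $\kappa = \mathrm{const}$ to obtain $m_1$ and $m_2$ explicitly, then apply Lemma 3.1 to check $m_2 m_2' - m_1 m_1' = 0$ (giving $N$-constancy) together with $m_2^2 - m_1^2 \neq 0$ (giving second kind). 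The torsion likewise needs care, as $m_2 = \kappa'/(\kappa^2\tau)$ presumes $\tau \neq 0$; it is the vanishing of $\kappa'$, not any cancellation with $\tau$, that makes $m_2 = 0$ legitimate.
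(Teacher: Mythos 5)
Your proposal has a genuine gap: it never uses the property that defines a circle in this paper, namely $\tau \equiv 0$ (the paper's proof opens with ``$\kappa(s)=const$ and $\tau(s)=0$''), and your chosen route is in fact incompatible with that property. Both theorems you lean on presuppose non-vanishing torsion: Theorem 3.8 is stated for $\tau(s)\neq 0$ and its third condition produces $m_{2}=\kappa ^{\prime }/(\kappa ^{2}\tau )$, while the proof of Theorem 3.6 obtains $m_{2}=-m_{1}^{\prime }/\tau$ by dividing by $\tau$. For a circle, $\kappa ^{\prime }=0$ and $\tau =0$ hold simultaneously, so $\kappa ^{\prime }/(\kappa ^{2}\tau )$ is an indeterminate $0/0$, not $0$; your closing remark that ``it is the vanishing of $\kappa'$, not any cancellation with $\tau$, that makes $m_2=0$ legitimate'' is precisely the step that fails. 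Separately, the sphere-membership claim is unsupported: in the degenerate pseudo-Galilean metric an admissible curve has non-isotropic first coordinate $x=s$, so $g(\alpha -c,\alpha -c)=(s+const)^{2}$ for any fixed center $c$, which cannot equal $\pm r^{2}$ on an interval. This is exactly why Theorem 3.8 carries the condition $s+c_{o}=0$, which cannot hold identically; re-reading the position vector relative to the center, as you propose, does not remove the non-constant tangential component $m_{o}=s+c_{o}$, so the circle simply does not lie on $S_{\pm }^{2}$ and Theorem 3.6 cannot be invoked.

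The paper's own proof avoids spheres entirely: it sets $\tau =0$ and quotes the formulas attached to Theorem 3.4, namely $m_{1}=1/(c_{3}\kappa )$ and $m_{2}=\int (-\tau /(c_{3}\kappa ))\,ds$, so both are constants and $\left\Vert \alpha ^{N}\right\Vert ^{2}=m_{2}^{2}-m_{1}^{2}$ is constant. Your suggested fallback (integrate (3.2) directly under $\kappa =const$ and check Lemma 3.1) is closer to this in spirit, but be aware it is not innocuous either: with $\tau =0$ the system (3.2) gives $m_{2}=const$ but $m_{1}^{\prime }=-\kappa (s+c_{o})$, hence $m_{1}$ quadratic in $s$, and $m_{2}m_{2}^{\prime }-m_{1}m_{1}^{\prime }\neq 0$ in general; constancy of $m_{1}$ only follows if one additionally imposes the constant-ratio relation (3.16), which is what the paper's appeal to Theorem 3.4 tacitly does. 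If you repair your argument, repair it along that line --- drop the sphere theorems, use $\tau =0$, and work with the Theorem 3.4 expressions for $m_{1}$ and $m_{2}$ --- rather than through $S_{\pm }^{2}$.
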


\begin{proof}
If $\alpha $ is a circle, then we have%
\begin{equation*}
\kappa (s)=const\text{ \ and \ \ \ }\tau (s)=0.
\end{equation*}

Also, from theorem 3.4, one can write%
\begin{equation*}
m_{1}=\frac{1}{c_{3}\kappa }=const.,
\end{equation*}%
\begin{equation*}
m_{2}=\int \left( \frac{-\tau }{c_{3}\kappa }\right) ds=const.,
\end{equation*}

which leads to%
\begin{equation*}
m_{2}^{2}(s)-m_{1}^{2}(s)=const.
\end{equation*}

So the proof is completed.
\end{proof}

\section{Examples}

In this section, we give some examples to illustrate our main results.

\begin{example}
Let us consider the following spacelike curve $\alpha :I\subset R\rightarrow
G_{3}^{1}$,%
\begin{equation}
\alpha (s)=\left( s,\frac{s}{6}\left[ 2\sinh (2\ln s)-\cosh (2\ln s)\right] ,%
\frac{s}{6}\left[ 2\cosh (2\ln s)-\sinh (2\ln s)\right] \right) .
\end{equation}%
Differentiating (4.1), we get%
\begin{equation}
\alpha ^{\prime }(s)=\left( 1,\frac{1}{2}\cosh (2\ln s),\frac{1}{2}\sinh
(2\ln s)\right) .
\end{equation}%
Pseudo-Galilean inner product follows that $\langle \alpha ^{\prime },\alpha
^{\prime }\rangle =1$. So the curve is parameterized by the arc-length and
the tangent vector is (4.2).

In order to calculate the first curvature of $\alpha $, let us express%
\begin{equation*}
T^{\prime }=\left( 0,\frac{1}{s}\sinh (2\ln s),\frac{1}{s}\cosh (2\ln
s)\right) ,
\end{equation*}%
taking the norm of both sides, we have $\kappa (s)=\frac{1}{s}$. Thereafter,
we have%
\begin{equation*}
N=\left( 0,\sinh (2\ln s),\cosh (2\ln s)\right) ,
\end{equation*}

and binormal vector%
\begin{equation*}
B=\left( 0,-\cosh (2\ln s),-\sinh (2\ln s)\right) .
\end{equation*}%
From Serret-Frenet equations (2.8), one can obtain $\tau (s)=\frac{-2}{s}$.
Moreover, the curvature functions $m_{i}(s)$ are%
\begin{equation*}
m_{o}=s,\text{ \ }m_{1}=\frac{s}{c_{3}},\text{ \ }m_{2}=-\Omega \text{ }s,%
\text{ }\Omega =\left( \frac{1+c_{3}}{2c_{3}}\right) =const.
\end{equation*}

So, from (3.16), we get%
\begin{equation*}
\frac{m_{o}^{2}}{m_{2}^{2}-m_{1}^{2}}=\digamma ,\text{ \ }\digamma =\frac{%
4(c_{3})^{2}}{(c_{3}+1)^{2}-4}=const.
\end{equation*}%
Under the above considerations, $\alpha $\ is of constant-ratio and
the ratio is equal $\digamma $.
Also, since%
\begin{equation*}
\left\Vert \alpha ^{N}(s)\right\Vert ^{2}=m_{2}^{2}(s)-m_{1}^{2}(s)=\left(
\frac{(c_{3}+1)^{2}-4}{4(c_{3})^{2}}\right) s^{2}\neq const.,
\end{equation*}%
then the curve $\alpha $\ is not a $N-constant$ curve.
\end{example}
\begin{figure}[h]
\begin{center}
\includegraphics[width=7cm, height=9cm]{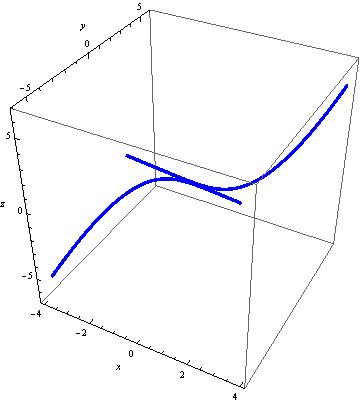}
\end{center}
\caption{Curve with Constant-Ratio and not a $N-constant$.}
\end{figure}
\begin{example}
Let $\alpha $ be a spacelike curve in$\ G_{3}^{1}$ given by
\begin{equation*}
\alpha (s)=\left( s,-a\int \left( \int \sinh (\frac{s^{2}}{2})ds\right)
ds,a\int \left( \int \cosh (\frac{s^{2}}{2})ds\right) ds\right) ,
\end{equation*}

where $a\in R$. Then we have%
\begin{eqnarray*}
\alpha ^{\prime }(s) &=&T(s)=\left( 1,-a\int \sinh (\frac{s^{2}}{2})ds,a\int
\cosh (\frac{s^{2}}{2})ds\right) , \\
T^{\prime }(s) &=&\left( 0,-a\sinh (\frac{s^{2}}{2}),a\cosh (\frac{s^{2}}{2}%
)\right) .
\end{eqnarray*}%
By a straightforward computation, we can obtain%
\begin{equation*}
N(s)=\left( 0,-\sinh (\frac{s^{2}}{2}),\cosh (\frac{s^{2}}{2})\right) ,
\end{equation*}%
\begin{equation*}
B(s)=\left( 0,-\cosh (\frac{s^{2}}{2}),\sinh (\frac{s^{2}}{2})\right) ,
\end{equation*}%
where $\kappa (s)=a=const$ and $\tau (s)=s$.

Since the curve has a constant curvature and non-constant torsion, so it is
a Salkowski curve.

Also, from theorem 3.4 we have the curvature functions%
\begin{eqnarray*}
m_{1} &=&\frac{1}{c_{3}\kappa }=\frac{1}{ac_{3}}, \\
m_{2} &=&\frac{\kappa ^{\prime }-\kappa ^{3}c_{3}s}{c_{3}\kappa ^{2}\tau }%
=-a,\text{ \ }a\text{ }is\text{ }constant,
\end{eqnarray*}

which leads to%
\begin{equation*}
m_{2}^{2}(s)-m_{1}^{2}(s)=(-a)^{2}-\left( \frac{1}{ac_{3}}\right) ^{2}=const.
\end{equation*}%
It follows that $\alpha $\ is a $N-constant$ curve in $G_{3}^{1}$.
\begin{figure}[h]
\begin{center}
\includegraphics[width=7cm, height=7cm]{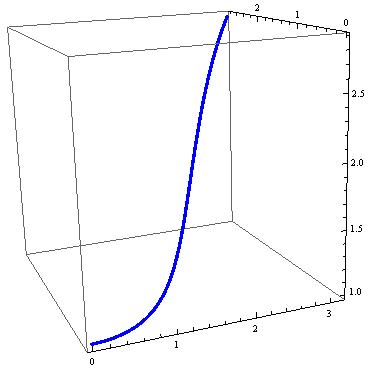}
\end{center}
\caption{$N-constant$ Salkowski curve and not a $constant-ratio$%
.}
\end{figure}
\end{example}

\section{Conclusion}

In the three-dimensional pseudo-Galilean space, spacelike admissible curves
of constant-ratio and some special curves such as $T-constant$ and $%
N-constant$ curves have been studied. Furthermore, the spherical images of
these curves are given. Some interesting results of $N-constant$ curves are
obtained. Finally as an application for this work, two examples have been
given and plotted to confirm our main results.

\end{document}